\documentclass[leqno]{amsart}
\usepackage{amsmath}
\usepackage{amssymb}
\usepackage{amsthm}
\usepackage{enumerate}
\usepackage[mathscr]{eucal}
\theoremstyle{plain}
\usepackage{tikz}
\newtheorem{theorem}{Theorem}[section]

\theoremstyle{definition}

\newtheorem{remark}[theorem]{Remark}

\newtheorem{cor}[theorem]{Corollary}
\theoremstyle{remark}
%\usepackage{bbm}

%\numberwithin{equation}

%    Absolute value notation

%    Blank box placeholder for figures (to avoid requiring any
%    particular graphics capabilities for printing this document).

\begin{document}

\title [On numerical radius and Crawford  number attainment sets]{On numerical radius and Crawford  number attainment sets of a bounded linear operator}

%    Information for first author
\author{Debmalya Sain, Arpita Mal, Pintu Bhunia and Kallol Paul }

\address{(Sain) Department of Mathematics, Indian Institute of Science, Bengaluru 560012,
Karnataka, India.}
\email{saindebmalya@gmail.com}

\address{(Mal) Department of Mathematics\\ Jadavpur University\\ Kolkata 700032\\ West Bengal\\ India.}
\email{arpitamalju@gmail.com}

\address{(Bhunia) Department of Mathematics, Jadavpur University, Kolkata 700032, India.}
\email{pintubhunia5206@gmail.com}

\address{(Paul) Department of Mathematics, Jadavpur University, Kolkata 700032, India.}
\email{kalloldada@gmail.com}

%\thanks will become a 1st page footnote.
\thanks{The research of Dr. Debmalya Sain is sponsored by Dr. D. S. Kothari Post-doctoral Fellowship, under the mentorship of Professor Gadadhar Misra.  Dr. Sain feels elated to acknowledge the loving guidance and inspirations of Swami Sivapradananda, his high school Headmaster. Miss Arpita Mal and  Mr. Pintu Bhunia would like to thank UGC, Govt. of India for the financial support in the form of SRF and JRF respectively. Prof. Kallol Paul would like to thank RUSA 2.0, Jadavpur University for the financial support.}
%\thanks{The first author was supported in part by NSF Grant \#000000.}
%\thanks{Mr. Pintu Bhunia would like to thank UGC, Govt. of India for the financial support in the form of JRF.}
%    Information for second author

%    General info
\subjclass[2010]{Primary 47A12, Secondary 46B20}
\keywords{ Numerical radius; Crawford number; norm attainment set; bounded linear operator. }

%    Information for second author

%    General info

\date{}
\maketitle
\begin{abstract}
We completely characterize the Crawford number attainment set and the numerical radius attainment set of a bounded linear operator on a Hilbert space. We study the intersection properties of the corresponding attainment sets of numerical radius, Crawford number, norm, minimum norm of a bounded linear operator defined on a normed space. Our study illustrates the similarities and the differences of the extremal properties of a bounded linear operator on a Hilbert space and a general normed space.
\end{abstract}

\section{Introduction}
The present article is devoted to studying some distinguished subsets of the unit sphere of a normed space, and their intersection properties, in connection with a given bounded linear operator defined on the space. Let us first mention the notations and the terminologies used in the article. \\

The letters $\mathbb{X}, \mathbb{H}$ stand for a normed space and a Hilbert space respectively. In this article, we consider the ground field to be either the set of real numbers $ \mathbb{R}, $ or, the set of complex numbers $ \mathbb{C}. $ Given $ z \in \mathbb{C}, $ we follow the usual convention to denote the real part of $ z $ and the imaginary part of $ z $ by Re $ z $ and Im $ z $ respectively. The complex conjugate of $ z $ is denoted by $ \overline{z}. $ Let $ B_{\mathbb{X}}= \{ x \in \mathbb{X}~:\|x\| \leq 1 \} $ and $  S_{\mathbb{X}}= \{ x \in \mathbb{X}~:\|x\| = 1 \} $ denote the unit ball and the unit sphere of $ \mathbb{X} $ respectively. Let $L(\mathbb{X})$ denote the collection of all bounded linear operators from $\mathbb{X}$ to $ \mathbb{X} $ and let $\mathbb X^*$ denote the dual space of $\mathbb{X}.$ Given a bounded linear operator $ T \in L(\mathbb{X}), $ there are certain subsets of $ S_{\mathbb{X}} $ and certain numerical constants associated with $ T, $ which are important in understanding the way $ T $ acts on $ \mathbb{X}. $ In this article, we explore some of these concepts and study the intersection properties of some sets associated with these concepts. Given $ T \in L(\mathbb{X}), $ the numerical range $ V(T) $, the numerical radius $ v(T), $ the Crawford number $c(T),$ the norm  $\|T\|$ and the minimum norm $m(T)$  of $T$ are defined  respectively as
\begin{eqnarray*}
	V(T)&= &\{ x^*(Tx) : x\in S_\mathbb X,x^* \in S_{\mathbb X^*},x^*(x)=1\},\\
	v(T)&=&\sup \{ | x^*(Tx) |~:x\in S_\mathbb X,x^* \in S_{\mathbb X^*},x^*(x)=1\},\\
	c(T)&=&\inf \{ | x^*(Tx) |~:x\in S_\mathbb X,x^* \in S_{\mathbb X^*},x^*(x)=1\},\\
	\|T\|&=&\sup \{\|Tx\|~:x \in S_{\mathbb{X}}\},\\
	m(T) &=&\inf \{\|Tx\|~:x \in S_{\mathbb{X}}\}.
\end{eqnarray*}
 We refer the readers to \cite{GR,SPM} for more information on these concepts and their importance in studying a bounded linear operator between Hilbert spaces and normed spaces. We would like to observe that other than $ V(T), $ which is a subset of the ground field, each of the above concepts is essentially a non-negative constant associated with the bounded linear operator $ T $ acting on $ \mathbb{X}. $ Therefore, a natural question in this context is to seek for a complete characterization of the points on the unit sphere of $ \mathbb{X} $ at which $ T $ attains these constants. This motivates the introduction of the numerical radius attainment set $V_T,$ the Crawford number attainment set $c_T,$ the norm attainment set $M_T$ and the minimum norm attainment set $m_T,$ corresponding to a bounded linear operator $T \in L(\mathbb{X})$ in the following natural way:
\begin{eqnarray*}
	V_T&=&\{ x \in S_{\mathbb{X}}~:~ \exists~ x^{*}\in S_{\mathbb{X^{*}}} ~\mbox{such that}~ |x^{*}(Tx)|=v(T) ~\mbox{and}~ x^{*}(x)=1 \},\\
	c_T&=&\{ x \in S_{\mathbb{X}}~:~ \exists~ x^{*}\in S_{\mathbb{X^{*}}} ~\mbox{such that}~ |x^{*}(Tx)|=c(T) ~\mbox{and}~ x^{*}(x)=1 \},\\
	M_T&=&\{ x \in S_{\mathbb{X}}~:~\|Tx\|=\|T\|\},\\
	m_T&=&\{ x \in S_{\mathbb{X}}~:~\|Tx\|=m(T)\}.
\end{eqnarray*}

We would like to mention here that a complete characterization of $ M_T $ has been obtained in \cite[Th. 2.1]{S} for operators on a Hilbert space. On the other hand, a complete characterization of both $ M_T $ and $ m_T $ has been obtained in \cite{SPM} for operators between Banach spaces. However, there is no known characterization of $ V_T $ or $ c_T, $ for a given bounded linear operator $ T. $ In the present article, we obtain an answer to this problem for a bounded linear operator on a Hilbert space, separately in the real case and the complex case. Let us recall in this context that given $ T \in L(\mathbb{H}), $ Re$ (T) $  and Im$ (T) $ denotes the real part and imaginary part of $ T, $ i.e., $ \textit{Re}(T) = \frac{T+T^*}{2},  \textit{Im}(T) = \frac{T-T^*}{2i}, $ where $ T^* $ is the adjoint operator of $ T $ and $ i^2 = -1. $ Using these notions, we obtain a complete characterization of the Crawford number attainment set of a bounded linear operator on a real (complex) Hilbert space. In fact, in case of complex Hilbert spaces, we obtain two equivalent characterizations of the Crawford number attainment set of a bounded linear operator. In similar spirit, we continue the study of the numerical radius attainment set of a bounded linear operator on a Hilbert space, and obtain complete characterizations of the same, separately in the real case and the complex case. As an immediate consequence of our exploration, we discuss the intersection properties of some special subsets of the unit sphere of a normed space, especially in the context of bounded linear operators on a Hilbert space. Whenever possible, we also discuss the aforementioned problems in the setting of normed spaces. Our study illustrates the difference between the geometry of Hilbert spaces and the geometry of normed spaces, from the perspective of investigating optimization problems for a bounded linear operator. A normed space $ \mathbb{X} $ is strictly convex if every unit vector in $ \mathbb{X} $ is an extreme point of the unit ball $ B_{\mathbb{X}}. $ A bounded linear operator $ T \in L(\mathbb{X}) $ is said to satisfy the Daugavet equation if $ \| I + T \| = 1 + \| T \|, $ where $ I $ is the identity operator on $ \mathbb{X}. $\\
We also recall that a bounded linear operator $ T \in L(\mathbb{H}) $ is said to be a partial isometry if $ T $ is an isometry on the orthogonal complement of its kernel. Given a partial isometry on a Hilbert space, the orthogonal complement of its kernel is called the initial subspace and the range of the operator is called the final subspace. 

\section{Characterization of Crawford number and Numerical radius attainment vector}

We begin with a complete characterization of the Crawford number attainment set of vectors of a bounded linear operator on a real Hilbert space.

\begin{theorem}\label{Th-ct}
	Let $\mathbb{H}$ be a real Hilbert space and $T\in L(\mathbb{H}).$ Let $x\in S_{\mathbb{H}}.$ Then $x\in c_T$  if and only if either of the following conditions holds:\\
	(i) $\langle Tx,x\rangle=0,$\\
	(ii) if $\langle Tx,x\rangle\neq 0,$ then $x$ is an eigenvector of $Re(T)$ and $|\langle Tx,x\rangle| \leq |\langle Tz,z\rangle|,$ for any $z\in x^{\perp}\cap S_{\mathbb{H}}.$ 
\end{theorem}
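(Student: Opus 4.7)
The plan is to first reduce the problem to a self-adjoint question. In a real Hilbert space, $\langle T^{*}x, x\rangle = \langle x, Tx\rangle = \langle Tx, x\rangle$, so $\langle Tx, x\rangle = \langle \mathrm{Re}(T)\, x, x\rangle$ for every $x \in S_{\mathbb{H}}$. Setting $S := \mathrm{Re}(T)$, the scalar $c(T)$ equals the infimum of $|\langle Sx, x\rangle|$ over $x \in S_{\mathbb{H}}$, and the set $c_T$ is exactly where this infimum is attained. Hence the analysis can be carried out entirely for the self-adjoint operator $S$.

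For the forward implication, assume $x \in c_T$. If $\langle Tx, x\rangle = 0$, then condition (i) holds. Otherwise set $\lambda = \langle Sx, x\rangle \neq 0$ and apply a perturbation argument: for a fixed $z \in x^{\perp}$ and small $t \in \mathbb{R}$, let $x_t = (x + tz)/\|x + tz\|$ and compute
\[
f(t) := \langle S x_t, x_t\rangle = \frac{\lambda + 2t\langle Sx, z\rangle + t^2 \langle Sz, z\rangle}{1 + t^2 \|z\|^2}.
\]
Since $f$ is smooth with $f(0) = \lambda \neq 0$, the composition $|f|$ is smooth near $0$, and its local-minimality at $t=0$ forces $f'(0) = 0$. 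A direct calculation gives $f'(0) = 2\langle Sx, z\rangle$, so $\langle Sx, z\rangle = 0$. Letting $z$ range over $x^{\perp}$ yields $Sx \in \mathrm{span}(x)$, so $x$ is an eigenvector of $\mathrm{Re}(T)$. The accompanying inequality in (ii) is just the statement of global minimality, specialized to $z \in x^{\perp} \cap S_{\mathbb{H}}$.

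For the reverse implication, condition (i) yields $c(T) \leq |\langle Tx, x\rangle| = 0$, forcing $c(T) = 0$ and hence $x \in c_T$. Under condition (ii), decompose any $y \in S_{\mathbb{H}}$ as $y = \alpha x + \beta w$ with $w \in x^{\perp} \cap S_{\mathbb{H}}$ and $\alpha^{2} + \beta^{2} = 1$. The eigenvector equation $Sx = \lambda x$ eliminates the cross-terms, leaving
\[
\langle Ty, y\rangle = \langle Sy, y\rangle = \alpha^{2} \lambda + \beta^{2} \langle Sw, w\rangle.
\]
The desired bound $|\langle Ty, y\rangle| \geq |\lambda|$ is then to be extracted from the hypothesis $|\lambda| \leq |\langle Sw, w\rangle|$.

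The step I expect to require the most care is precisely this last inequality, since $|\alpha^{2}\lambda + \beta^{2}\mu| \geq |\lambda|$ follows from $|\mu| \geq |\lambda|$ only when $\lambda$ and $\mu = \langle Sw, w\rangle$ share a sign. Excluding the opposite-sign case will likely rest on combining the continuity of $w \mapsto \langle Sw, w\rangle$ on the connected set $x^{\perp} \cap S_{\mathbb{H}}$ with the strict positivity $|\langle Sw, w\rangle| \geq |\lambda| > 0$, so that $\langle Sw, w\rangle$ has constant sign, and then tying this sign to that of $\lambda$ via a path in $S_{\mathbb{H}}$ from $x$ to $w$ and an intermediate-value argument preventing $\langle S\cdot,\cdot\rangle$ from vanishing. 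This sign-tracking is the principal technical obstacle I foresee in completing the proof.
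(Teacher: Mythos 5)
Your necessity argument coincides with the paper's: the paper differentiates $\psi_z(t)=|\langle T(x+tz),x+tz\rangle|/\|x+tz\|^2$ at $t=0$ to get $\langle \mathrm{Re}(T)x,z\rangle=0$ for all $z\perp x$, exactly as you do for $S=\mathrm{Re}(T)$, and that half is sound. The genuine gap is precisely the step you flagged as the ``principal technical obstacle,'' and it cannot be closed, because the implication (ii) $\Rightarrow x\in c_T$ is false as stated. Take $\mathbb{H}=\mathbb{R}^2$ and $T=\mathrm{diag}(1,-1)$, $x=e_1$: then $\langle Tx,x\rangle=1\neq 0$, $x$ is an eigenvector of $\mathrm{Re}(T)=T$, and $|\langle Tz,z\rangle|=1\geq |\langle Tx,x\rangle|$ for every $z\in x^{\perp}\cap S_{\mathbb{H}}=\{\pm e_2\}$, so (ii) holds; yet the numerical range is $[-1,1]$, hence $c(T)=0\neq 1=|\langle Tx,x\rangle|$ and $x\notin c_T$. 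Your proposed rescue via connectedness of $x^{\perp}\cap S_{\mathbb{H}}$ and an intermediate-value argument does not survive higher dimensions either: for $T=\mathrm{diag}(1,-1,-1)$ on $\mathbb{R}^3$ and $x=e_1$, the form $\langle Tz,z\rangle$ is identically $-1$ on the connected set $x^{\perp}\cap S_{\mathbb{H}}$, so (ii) holds and the sign is constant but opposite to $\lambda=1$; on the path $y_t=\cos t\,e_1+\sin t\,e_2$ the value $\langle Ty_t,y_t\rangle=\cos 2t$ does vanish, which is exactly why $c(T)=0$ and $x\notin c_T$. Nothing in hypothesis (ii) prevents that vanishing.

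For what it is worth, the paper's own proof of sufficiency has the identical defect: it asserts that ``it is clear from the convexity of the numerical range that $\langle Tx,x\rangle\langle Tz,z\rangle>0$ for any $z\in S_{\mathbb{H}}$,'' which does not follow from (ii) and fails in the examples above. If that sign condition is added to (ii) as an extra hypothesis (equivalently, if one requires $|\lambda|=c(T)$, as in Corollary \ref{real-ct}), then your computation $|\alpha^{2}\lambda+\beta^{2}\mu|=\alpha^{2}|\lambda|+\beta^{2}|\mu|\geq|\lambda|$ closes the argument immediately. So you have correctly isolated the one step that matters; the resolution is not a cleverer sign-tracking argument but a strengthening of the hypothesis, since the statement as written is not provable.
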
	
\begin{proof}
	First we prove the sufficient part of the theorem. $(i)$ clearly implies that $x\in c_T.$ Suppose $(ii)$ holds. It is clear from the convexity of numerical range that $\langle Tx,x\rangle \langle Tz,z\rangle > 0,$ for any $z\in S_{\mathbb{H}}.$  Let $u\in S_{\mathbb{H}}.$ Then $u=ax+bz,$ for some $a,b\in \mathbb{R}$ and $z\in x^{\perp}\cap S_{\mathbb{H}}.$ Now,
	\begin{eqnarray*}
		|\langle Tu,u\rangle |&=& |\langle aTx+bTz,ax+bz\rangle|\\
		&=& |a^2\langle Tx,x\rangle+b^2 \langle Tz,z\rangle+ab(\langle Tx,z\rangle+\langle Tz,x\rangle)|\\
		&=& |a^2\langle Tx,x\rangle+b^2 \langle Tz,z\rangle+2ab\langle Re(T)x,z\rangle|\\
		&=& |a^2\langle Tx,x\rangle+b^2 \langle Tz,z\rangle|\\
		&=& |\langle Tx,x\rangle+b^2(\langle Tz,z\rangle-\langle Tx,x\rangle)|\\
		&\geq&|\langle Tx,x\rangle|,~~\text{since } \langle Tx,x\rangle \langle Tz,z\rangle > 0 \text{ and } |\langle Tx,x\rangle| \leq |\langle Tz,z\rangle|.
	\end{eqnarray*}
	Therefore, $x\in c_T.$\\
	Now, we prove the necessary part of the theorem. Let $x\in c_T.$ Suppose $(i)$ is not true. Clearly, $|\langle Tx,x\rangle| \leq |\langle Tz,z\rangle|.$ We only show that $x$ is an eigenvector of $Re(T).$ Let $Re(T)x=ax+bz,$ where $a,b\in \mathbb{R}$ and $z\in x^{\perp}\cap S_{\mathbb{H}}.$ Consider a function $\psi_z:\mathbb{R}\to \mathbb{R}$ defined as:
	\[\psi_z(t)=\frac{|\langle T(x+tz),x+tz\rangle|}{\|x+tz\|^2}.\] 
	Now, $\langle Tx,x\rangle \neq 0\Rightarrow \psi_z$ is differentiable in a neighbourhood of $0.$ Since $x\in c_T,~\psi_z$ has a minima at $t=0.$ Therefore, $\psi_z^{'}(0)=0\Rightarrow \lim_{t\to 0}\frac{\psi_z(t)-\psi_z(0)}{t}=0\Rightarrow \langle Re(T)x,z\rangle=0\Rightarrow \langle ax+bz,z\rangle=0\Rightarrow b=0.$ Thus, $x$ is an eigenvector of $Re(T).$ This completes the proof of the theorem.
\end{proof}	

In case of a complex Hilbert space, the characterization of the Crawford number attainment set of a bounded linear operator assumes the following form:

\begin{theorem} \label{com-ct}
	Let $\mathbb{H}$ be a complex Hilbert space. Let $T\in L(\mathbb{H})$ and $x\in S_{\mathbb{H}}.$ Then $x\in c_T$ if and only if  either $\langle Tx,x\rangle=0$ or $x$ is an eigenvector of $\langle Re(T)x,x\rangle Re(T)+\langle Im(T)x,x\rangle Im(T)$ corresponding to the eigenvalue $c^2(T).$
\end{theorem}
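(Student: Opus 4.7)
The plan is to exploit the identity
\[
 |\langle Tx,x\rangle|^2 \;=\; \langle Re(T)x,x\rangle^2 + \langle Im(T)x,x\rangle^2 \;=\; \langle B_x x,\,x\rangle,
\]
where $B_x := \langle Re(T)x,x\rangle\, Re(T) + \langle Im(T)x,x\rangle\, Im(T)$ is self-adjoint; the first equality uses that $Re(T)$ and $Im(T)$ are self-adjoint, so the two summands in $\langle Tx,x\rangle = \langle Re(T)x,x\rangle + i\langle Im(T)x,x\rangle$ are real. With this identity the sufficient direction becomes essentially tautological, while the necessary direction follows from a single Lagrange-multiplier-type calculation on the unit sphere.

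For sufficiency, if $\langle Tx,x\rangle = 0$ for some $x\in S_{\mathbb H}$, then by definition $c(T)=0$ and $x\in c_T$. Otherwise, if $B_x x = c^2(T)\, x$ with $\|x\|=1$, pairing with $x$ and invoking the identity above gives $|\langle Tx,x\rangle|^2 = c^2(T)$, so $x\in c_T$.

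For necessity, assume $x\in c_T$ and $\langle Tx,x\rangle \neq 0$, so that $c(T) > 0$. In the spirit of the argument used for Theorem \ref{Th-ct}, I would consider, for each $z\in\mathbb H$, the scale-invariant function
\[
 g_z(t) \;=\; \frac{|\langle T(x+tz),\,x+tz\rangle|^2}{\|x+tz\|^4}, \qquad t\in\mathbb R,
\]
which is smooth near $0$ and satisfies $g_z(t)\geq c^2(T) = g_z(0)$ for $t$ in a neighbourhood of $0$. A direct computation, expanding the numerator as $A(t)^2+B(t)^2$ with $A(t)=\langle Re(T)(x+tz),x+tz\rangle$ and $B(t)=\langle Im(T)(x+tz),x+tz\rangle$ and using self-adjointness of $Re(T),\,Im(T)$, yields
\[
 g_z'(0) \;=\; 4\,\mathrm{Re}\bigl\langle B_x x - c^2(T)\,x,\; z\bigr\rangle.
\]
Since $t=0$ is a local minimum, this real part vanishes for every $z\in\mathbb H$.

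The main (mild) obstacle, absent in the real case of Theorem \ref{Th-ct}, is converting this \emph{real} orthogonality relation into the actual complex eigenvector equation. This is handled by the standard device of running the argument also with $z$ replaced by $iz$: the resulting identity gives vanishing of the imaginary part of $\langle B_x x - c^2(T)x,\, z\rangle$ as well, so the full inner product vanishes for every $z\in\mathbb H$, forcing $B_x x = c^2(T)\,x$. Thus $x$ is an eigenvector of $B_x$ with eigenvalue $c^2(T)$, completing the necessary direction and hence the theorem.
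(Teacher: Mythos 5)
Your proposal is correct and follows essentially the same route as the paper: the same scale-invariant function $\psi_y(t)=|\langle T(x+ty),x+ty\rangle|^2/\|x+ty\|^4$, the vanishing of its derivative at the minimum $t=0$, and the identity $\langle B_x x,x\rangle=|\langle Tx,x\rangle|^2$ for the sufficiency direction. In fact you supply the two details the paper leaves implicit — the explicit computation $g_z'(0)=4\,\mathrm{Re}\langle B_x x-c^2(T)x,z\rangle$ and the substitution $z\mapsto iz$ needed to upgrade the real orthogonality relation to the full complex eigenvector equation — so no changes are needed.
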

\begin{proof}
	First we prove the necessary part of the theorem. Let $x\in c_T$. If $\langle Tx,x\rangle =0,$ then we are done. Suppose $\langle Tx,x\rangle \neq 0.$  Let $y\in S_{\mathbb{H}}$ be arbitrary. Consider the function $\psi_y:\mathbb{R}\to \mathbb{R}$ defined as 
	$$\psi_y(t)=\frac{|\langle T(x+ty),x+ty\rangle|^2}{\|x+ty\|^4}.$$ Since $x\in c_T$, so $\psi_y(t)$ has minima at $t=0.$ Since $\psi_y(t)$ is differentiable in a neighbourhood of $t=0$, so $\psi_y^{'}(0)=0$. This implies that $x$ is an eigenvector of $\langle Re(T)x,x\rangle Re(T)$ $+\langle Im(T)x,x\rangle Im(T).$ Therefore there exists a scalar $\lambda$ such that $ \big(\langle Re(T)x,x\rangle Re(T)$ $+\langle Im(T)x,x\rangle Im(T) \big) x=\lambda x.$ This shows that $\lambda=|\langle Tx,x\rangle|^2=c^2(T).$ This completes the proof of the necessary part of the theorem.
	
	Next we prove the sufficient part of the theorem. Let $x\in S_{\mathbb{H}}.$ If $\langle Tx,x\rangle =0,$ then clearly $x\in c_T.$ Suppose $x$ is an eigenvector of $\langle Re(T)x,x\rangle Re(T)+\langle Im(T)x,x\rangle Im(T)$ corresponding to the eigenvalue $c^2(T)$, i.e., $$ \big(\langle Re(T)x,x\rangle Re(T)+\langle Im(T)x,x\rangle Im(T) \big) x=c^2(T) x.$$ This implies that $c(T)=|\langle Tx,x\rangle|.$ So $x\in c_T$. This completes the proof of the theorem.
\end{proof}

Now, for real Hilbert space, the following result can be easily obtained. 
\begin{cor}\label{real-ct}
	Let $\mathbb{H}$ be a real Hilbert space. Let $T\in L(\mathbb{H})$ and $x\in S_{\mathbb{H}}.$ Then $x\in c_T$ if and only if  either $\langle Tx,x\rangle=0$ or $x$ is an eigenvector of $ Re(T)$ corresponding to the eigenvalue $\lambda$ with  $| \lambda |=c(T).$
\end{cor}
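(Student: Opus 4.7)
The plan is to derive this corollary as a direct rewriting of Theorem \ref{Th-ct}, exploiting two basic facts specific to the real Hilbert space setting. First, by the Riesz representation theorem, membership of $x \in S_{\mathbb{H}}$ in $c_T$ is equivalent to $|\langle Tx,x\rangle| = c(T)$; there is no genuine role played by an auxiliary functional $x^*$. Second, since the inner product is real-valued, $\langle Tx, x\rangle = \langle x, T^*x\rangle = \langle T^*x, x\rangle$, so $\langle Tx, x\rangle = \langle \mathrm{Re}(T)x, x\rangle$ for every $x$. In particular, if $x$ is an eigenvector of $\mathrm{Re}(T)$ with eigenvalue $\lambda$, then $\langle Tx, x\rangle = \lambda$, so $|\langle Tx,x\rangle| = |\lambda|$.

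For the sufficient direction, I would handle the two cases separately. If $\langle Tx,x\rangle = 0$, then $0 \leq c(T) \leq |\langle Tx,x\rangle| = 0$, forcing $|\langle Tx,x\rangle| = c(T)$ and hence $x \in c_T$. If instead $x$ is an eigenvector of $\mathrm{Re}(T)$ for an eigenvalue $\lambda$ with $|\lambda| = c(T)$, then the identity above gives $|\langle Tx,x\rangle| = |\lambda| = c(T)$, so $x \in c_T$ directly from the definition.

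For the necessary direction, suppose $x \in c_T$ and $\langle Tx, x\rangle \neq 0$. Apply condition (ii) of Theorem \ref{Th-ct} to conclude that $x$ is an eigenvector of $\mathrm{Re}(T)$; let $\lambda$ be its eigenvalue. Then $|\lambda| = |\langle \mathrm{Re}(T)x,x\rangle| = |\langle Tx,x\rangle| = c(T)$, which is exactly the claim. Note that the auxiliary minimality inequality $|\langle Tx,x\rangle| \leq |\langle Tz,z\rangle|$ from Theorem \ref{Th-ct} is not needed as a separate hypothesis here; it is automatically true because $|\langle Tx,x\rangle| = c(T)$ is already the infimum.

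There is essentially no obstacle, since Theorem \ref{Th-ct} has already performed all the analytic work (the differentiability argument that produces the eigenvector condition). The only point requiring a moment of care is checking that the sufficient direction of the corollary does not require an independent verification of the inequality in Theorem \ref{Th-ct}(ii): once $|\langle Tx,x\rangle| = c(T)$ is established from the eigenvalue relation, $x \in c_T$ follows immediately from the definition of $c(T)$ as an infimum, and no comparison with orthogonal unit vectors is necessary.
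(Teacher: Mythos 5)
Your proof is correct. The paper gives no explicit argument for this corollary --- it is introduced only with ``the following result can be easily obtained'' --- but its placement immediately after Theorem \ref{com-ct} indicates that the intended route is to specialize the complex characterization: in the real case the $Im(T)$ term is absent, so the eigenvector condition there reads $\langle Tx,x\rangle\, Re(T)x=c^2(T)x$, and dividing by $\lambda=\langle Tx,x\rangle\neq 0$ yields $Re(T)x=\lambda x$ with $\lambda^{2}=c^{2}(T)$, i.e.\ $|\lambda|=c(T)$. You instead derive the corollary from the real-case Theorem \ref{Th-ct}, which is equally short and arguably cleaner: you correctly reduce membership in $c_T$ to the single equation $|\langle Tx,x\rangle|=c(T)$ via Riesz representation and the uniqueness of the norming functional at a point of a Hilbert sphere, use $\langle Tx,x\rangle=\langle Re(T)x,x\rangle$ to identify the eigenvalue, and you rightly note that the minimality inequality in Theorem \ref{Th-ct}(ii) need not be verified separately in the sufficiency direction, since $|\langle Tx,x\rangle|=c(T)$ already certifies that the infimum is attained at $x$. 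Both routes are one-line consequences of results already proved; nothing is missing from your argument.
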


Our next result gives another characterization for the Crawford number attainment set of a bounded linear operator on a complex Hilbert space.

\begin{theorem}\label{th-ctcom}
	Let $\mathbb{H}$ be a complex Hilbert space. Let $T\in L(\mathbb{H}).$ Let $c_T>0$ and $x\in S_{\mathbb{H}}.$ Then $x\in c_T$ if and only if for arbitrary $y\in x^{\perp}\cap S_{\mathbb{H}},$ either of the following conditions holds:\\
	(i) $\langle Ty,x\rangle=\overline{\langle Tx,y\rangle}$ and $||a|^2\langle Tx,x\rangle+|b|^2\langle Ty,y\rangle+2Re\{a\overline{b}\langle Tx,y\rangle\}|\geq |\langle Tx,x\rangle|,$ for all $a,b\in \mathbb{C}$ with $|a|^2+|b|^2=1.$\\
	(ii) $\langle Ty,x\rangle=-\overline{\langle Tx,y\rangle}$ and $||a|^2\langle Tx,x\rangle+|b|^2\langle Ty,y\rangle+2iIm\{a\overline{b}\langle Tx,y\rangle\}|\geq |\langle Tx,x\rangle|,$ for all $a,b\in \mathbb{C}$ with $|a|^2+|b|^2=1.$
\end{theorem}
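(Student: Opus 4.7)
The plan is to parameterise every unit vector of $\mathbb{H}$ as $u = ax + by$ with $y \in x^\perp \cap S_\mathbb{H}$ and $|a|^2 + |b|^2 = 1$, so that $u \in S_\mathbb{H}$, and to compare $|\langle Tu, u\rangle|$ with $|\langle Tx, x\rangle|$ through the expansion
\[
\langle Tu, u\rangle = |a|^2 \langle Tx, x\rangle + |b|^2 \langle Ty, y\rangle + a\overline{b}\langle Tx, y\rangle + \overline{a}b\langle Ty, x\rangle.
\]
Under the sign relation $\langle Ty, x\rangle = \overline{\langle Tx, y\rangle}$ appearing in (i), one has $\overline{a}b\langle Ty, x\rangle = \overline{a\overline{b}\langle Tx, y\rangle}$, so the cross terms collapse to $2Re\{a\overline{b}\langle Tx, y\rangle\}$; under $\langle Ty, x\rangle = -\overline{\langle Tx, y\rangle}$ from (ii), they collapse analogously to $2iIm\{a\overline{b}\langle Tx, y\rangle\}$. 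Hence the scalars inside the moduli in (i) and (ii) are literally equal to $\langle Tu, u\rangle$ whenever the corresponding sign clause is in force, and this identification is the computational backbone of the argument.

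For sufficiency, assuming that (i) or (ii) holds for every $y \in x^\perp \cap S_\mathbb{H}$, I take an arbitrary $u \in S_\mathbb{H}$, orthogonally decompose $u = ax + by$ with $y \in x^\perp \cap S_\mathbb{H}$ and $|a|^2+|b|^2=1$, apply the appropriate collapse of the cross terms, and invoke the hypothesised inequality to conclude $|\langle Tu, u\rangle| \geq |\langle Tx, x\rangle|$. Taking the infimum over $u \in S_\mathbb{H}$ forces $|\langle Tx, x\rangle| = c(T)$, so $x \in c_T$.

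For necessity, fix $x \in c_T$ and $y \in x^\perp \cap S_\mathbb{H}$; the hypothesis $c_T > 0$ together with Theorem~\ref{com-ct} gives $\langle Tx, x\rangle \neq 0$. Once the sign clause of (i) or (ii) is in hand, the associated inequality is a direct restatement of $x \in c_T$: for any $a,b \in \mathbb{C}$ with $|a|^2+|b|^2=1$ the vector $u = ax + by$ is a unit vector, so $|\langle Tu, u\rangle| \geq c(T) = |\langle Tx, x\rangle|$, and the sign relation rewrites $\langle Tu, u\rangle$ into precisely the form inside the corresponding modulus.

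The main obstacle is therefore producing the dichotomy $\langle Ty, x\rangle \in \{\overline{\langle Tx, y\rangle}, -\overline{\langle Tx, y\rangle}\}$ itself. My plan here is to imitate the variational method of Theorem~\ref{com-ct}: set $A = \langle Tx, x\rangle$, $C = \langle Tx, y\rangle$, $D = \langle Ty, x\rangle$, and analyse
\[
\psi(t) = \frac{|\langle T(x+ty), x+ty\rangle|^2}{\|x+ty\|^4}, \qquad t \in \mathbb{C},
\]
which attains its minimum $c(T)^2$ at $t = 0$ and is smooth near $0$ because $A \neq 0$. Differentiating separately in the real and imaginary directions of $t$ and setting both derivatives to zero at $0$ yields the first-order identity $\overline{A}C + A\overline{D} = 0$; taking moduli gives $|C| = |D|$, so $D = e^{i\phi}\overline{C}$ for some $\phi \in \mathbb{R}$. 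Reducing $e^{i\phi}$ to $\pm 1$ by a suitable unimodular rescaling of $y$ inside its complex line in $x^\perp$, and checking that this rescaling is compatible with the expressions in (i) and (ii), is expected to be the subtle technical step of the argument.
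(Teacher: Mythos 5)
Your sufficiency argument and the inequality half of the necessity argument coincide with the paper's: expand $\langle T(ax+by),ax+by\rangle$, collapse the cross terms under the relevant sign relation, and read the inequality off from $x\in c_T$. Where you diverge is in how the dichotomy $\langle Ty,x\rangle=\pm\overline{\langle Tx,y\rangle}$ is to be produced. The paper does not use a variational identity there; it argues by contradiction, choosing $a,b$ with $|a|^2+|b|^2=1$ so that the cross term $a\overline{b}\langle Tx,y\rangle+\overline{a}b\langle Ty,x\rangle$ has argument $\pi+\arg\langle Tx,x\rangle$ and, for small $|b|$, dominates $|b|^2|\langle Ty,y\rangle|$, forcing $|\langle T(ax+by),ax+by\rangle|<|\langle Tx,x\rangle|$.

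Your route has a genuine gap at exactly the step you flag as subtle, and it cannot be repaired as proposed. Write $A=\langle Tx,x\rangle$, $C=\langle Tx,y\rangle$, $D=\langle Ty,x\rangle$. Your first-order identity $\overline{A}C+A\overline{D}=0$ is correct, but it gives more than $|C|=|D|$: it pins down the phase completely, namely $D=-(A/\overline{A})\,\overline{C}$, which equals $+\overline{C}$ or $-\overline{C}$ precisely when $A$ is purely imaginary or real, respectively. Moreover, the phase $e^{i\phi}$ in $D=e^{i\phi}\overline{C}$ is invariant under the rescaling $y\mapsto\omega y$ with $|\omega|=1$: one has $C\mapsto\overline{\omega}C$ and $D\mapsto\omega D$, so $D$ and $\overline{C}$ acquire the same factor $\omega$ and the ratio $D/\overline{C}$ does not move. (This had to happen, since the theorem already quantifies over all $y\in x^{\perp}\cap S_{\mathbb{H}}$, hence over every unimodular multiple of a given $y$.) So no choice of representative in the complex line of $y$ can normalize $e^{i\phi}$ to $\pm1$; the clean necessary condition your method actually delivers is $\overline{A}\langle Tx,y\rangle+A\overline{\langle Ty,x\rangle}=0$, and the stated dichotomy follows from it only when $C=0$ or $\langle Tx,x\rangle$ is real or purely imaginary. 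You should also be aware that this computation puts pressure on the paper's own construction: when $|C|=|D|$ the reachable values of the cross term $a\overline{b}C+\overline{a}bD$ form a line segment through the origin in a fixed direction, so the prescribed argument $\pi+\arg A$ need not be attainable, which is the same degenerate case your identity isolates.
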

\begin{proof}
First we prove the easier sufficient part of the theorem. Let $u\in S_{\mathbb{H}}.$ Then there exist $a,b\in \mathbb{C}$ and $y\in x^{\perp}\cap S_{\mathbb{H}},$ such that $u=ax+by.$ Suppose for this $y\in x^{\perp}\cap S_{\mathbb{H}}$ $(i)$ holds. Then
\begin{eqnarray*}
|\langle Tu,u\rangle|&=&|\langle T(ax+by),ax+by\rangle|\\
&=&||a|^2\langle Tx,x\rangle+|b|^2\langle Ty,y\rangle+a\overline{b}\langle Tx,y\rangle+\overline{a}b\langle Ty,x\rangle|\\
&=&||a|^2\langle Tx,x\rangle+|b|^2\langle Ty,y\rangle+2Re\{a\overline{b}\langle Tx,y\rangle\}|\\
&\geq& |\langle Tx,x\rangle|
\end{eqnarray*}
Similarly, if $(ii)$ holds, then $|\langle Tu,u\rangle|\geq|\langle Tx,x\rangle|$ holds. Thus, $x\in c_T.$\\
Now, we	prove the necessary part of the theorem. Let $x\in c_T.$ Let $y\in x^{\perp}\cap S_{\mathbb{H}}.$ Then $|\langle Tx,x\rangle |>0,$ since $c_T>0.$ If possible, suppose that $\langle Ty,x\rangle\neq \overline{\langle Tx,y\rangle}$ and $\langle Ty,x\rangle\neq -\overline{\langle Tx,y\rangle}.$ Now, it is possible to find $a,b\in \mathbb{C}$ such that $|a|^2+|b|^2=1$ and $arg\{a\overline{b}\langle Tx,y\rangle+\overline{a}b\langle Ty,x\rangle\}=\pi+arg\{\langle Tx,x\rangle\}.$ It is easy to check that for sufficiently small $|b|,$ $|a\overline{b}\langle Tx,y\rangle+\overline{a}b\langle Ty,x\rangle|>|b|^2|\langle Ty,y\rangle|.$ Therefore, we have,
\begin{eqnarray*}
	|\langle T(ax+by),ax+by\rangle|&=&||a|^2\langle Tx,x\rangle+|b|^2|\langle Ty,y\rangle+a\overline{b}\langle Tx,y\rangle+\overline{a}b\langle Ty,x\rangle|\\
	&\leq& ||a|^2|\langle Tx,x\rangle|-|a\overline{b}\langle Tx,y\rangle+\overline{a}b\langle Ty,x\rangle||+|b|^2|\langle Ty,y\rangle|\\
	&=& |a|^2|\langle Tx,x\rangle|-|a\overline{b}\langle Tx,y\rangle+\overline{a}b\langle Ty,x\rangle|+|b|^2|\langle Ty,y\rangle|\\
	&<& |a|^2|\langle Tx,x\rangle|\\
	&<& |\langle Tx,x\rangle|,
\end{eqnarray*}	
contradicting that $x\in c_T.$ Therefore, either $\langle Ty,x\rangle = \overline{\langle Tx,y\rangle}$ or $\langle Ty,x\rangle = -\overline{\langle Tx,y\rangle}.$ Now, let $\langle Ty,x\rangle = \overline{\langle Tx,y\rangle}.$ Then for all $a,b\in \mathbb{C}$ such that $|a|^2+|b|^2=1,$ $|\langle T(ax+by),ax+by\rangle|\geq |\langle Tx,x\rangle|\Rightarrow ||a|^2\langle Tx,x\rangle+|b|^2\langle Ty,y\rangle+2Re\{a\overline{b}\langle Tx,y\rangle\}|\geq |\langle Tx,x\rangle|.$ Similarly, for $y\in x^{\perp}\cap S_{\mathbb{H}},$ $\langle Ty,x\rangle = -\overline{\langle Tx,y\rangle}$ implies that for all $a,b\in \mathbb{C}$ such that $|a|^2+|b|^2=1,$ $||a|^2\langle Tx,x\rangle+|b|^2\langle Ty,y\rangle+2iIm\{a\overline{b}\langle Tx,y\rangle\}|\geq |\langle Tx,x\rangle|.$ This completes the proof of the theorem.
\end{proof}

We next obtain a complete characterization of the numerical radius attainment set of a bounded linear operator on a Hilbert space. First we treat the case of real Hilbert spaces.

\begin{theorem}\label{Th-vt}
	Let $\mathbb{H}$ be a real Hilbert space and $T\in L(\mathbb{H}).$ Let $x\in S_{\mathbb{H}}.$ Then $x\in V_T$  if and only if for any $z\in x^{\perp}\cap S_{\mathbb{H}},$ the following conditions hold:\\
	(i) $x$ is an eigenvector of $Re(T).$\\
	(ii) $|\langle Tx,x\rangle| \geq |\langle Tz,z\rangle|.$ 
\end{theorem}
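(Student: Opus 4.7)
\textbf{Proof proposal for Theorem \ref{Th-vt}.} The plan is to mimic the strategy of Theorem \ref{Th-ct} but with the role of minimum replaced by maximum, adjusting for the fact that here both (i) and (ii) must hold simultaneously (rather than being offered as alternatives).

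\emph{Sufficiency.} Given $u\in S_{\mathbb{H}}$, write $u=ax+bz$ with $z\in x^{\perp}\cap S_{\mathbb{H}}$ and $a^{2}+b^{2}=1$. Expanding as in the proof of Theorem \ref{Th-ct}, I obtain
\[
\langle Tu,u\rangle \;=\; a^{2}\langle Tx,x\rangle+b^{2}\langle Tz,z\rangle+2ab\,\langle Re(T)x,z\rangle.
\]
Condition (i) gives $Re(T)x=\mu x$ for some $\mu\in\mathbb{R}$, hence $\langle Re(T)x,z\rangle=\mu\langle x,z\rangle=0$ and the cross term vanishes. Then the triangle inequality together with (ii) yields
\[
|\langle Tu,u\rangle|\;\leq\; a^{2}|\langle Tx,x\rangle|+b^{2}|\langle Tz,z\rangle|\;\leq\;|\langle Tx,x\rangle|,
\]
which shows $|\langle Tx,x\rangle|=v(T)$, i.e., $x\in V_{T}$.

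\emph{Necessity.} Condition (ii) is immediate: for any $z\in x^{\perp}\cap S_{\mathbb{H}}$, $z\in S_{\mathbb{H}}$, so $|\langle Tz,z\rangle|\le v(T)=|\langle Tx,x\rangle|$. For (i), I first dispatch the degenerate case $\langle Tx,x\rangle=0$, in which $v(T)=0$ forces $\langle Ty,y\rangle=0$ for every $y\in S_{\mathbb{H}}$; a standard polarization argument then gives $Re(T)=0$ and so every vector is an eigenvector of $Re(T)$.

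\emph{The non-degenerate case.} Assume $\langle Tx,x\rangle\ne 0$ and fix an arbitrary $z\in x^{\perp}\cap S_{\mathbb{H}}$. Define
\[
\psi_{z}(t)\;=\;\frac{|\langle T(x+tz),x+tz\rangle|}{\|x+tz\|^{2}},\qquad t\in\mathbb{R}.
\]
Because $\langle Tx,x\rangle\ne 0$, $\psi_{z}$ is differentiable in a neighbourhood of $0$, and $x\in V_{T}$ forces $\psi_{z}$ to attain a \emph{local maximum} at $t=0$. Setting $f(t)=\langle T(x+tz),x+tz\rangle=\langle Tx,x\rangle+2t\langle Re(T)x,z\rangle+t^{2}\langle Tz,z\rangle$ and $g(t)=\|x+tz\|^{2}=1+t^{2}$, the condition $\psi_{z}'(0)=0$ reduces (using $g'(0)=0$ and $f(0)\ne 0$) to $f'(0)=0$, i.e., $\langle Re(T)x,z\rangle=0$. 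Since $z\in x^{\perp}\cap S_{\mathbb{H}}$ was arbitrary, this gives $Re(T)x\in(x^{\perp})^{\perp}=\mathrm{span}\{x\}$, so $x$ is an eigenvector of $Re(T)$.

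The argument is essentially a clone of the Crawford-number computation, only with ``$\geq$'' replaced by ``$\leq$'' in the key inequality. The one mildly technical point I expect to be the main annoyance is checking differentiability (hence justifying $\psi_{z}'(0)=0$) when $\langle Tx,x\rangle$ is close to zero; this is why the hypothesis $\langle Tx,x\rangle\ne 0$ has to be treated separately from the degenerate case above. Everything else is straightforward algebra.
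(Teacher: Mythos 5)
Your proposal is correct, and the sufficiency half is essentially the paper's argument (your single application of the triangle inequality, $|\langle Tu,u\rangle|\le a^{2}|\langle Tx,x\rangle|+b^{2}|\langle Tz,z\rangle|\le|\langle Tx,x\rangle|$, is in fact a slight streamlining of the paper's sign-by-sign case analysis). The necessity of (i), however, is proved by a genuinely different route. You clone the variational argument of Theorem \ref{Th-ct}: the Rayleigh-type quotient $\psi_{z}$ has a global maximum at $t=0$, so $\psi_{z}'(0)=0$, which (after checking differentiability when $\langle Tx,x\rangle\neq 0$) yields $\langle Re(T)x,z\rangle=0$ for all $z\perp x$; this forces you to treat the degenerate case $\langle Tx,x\rangle=0$ separately, which you do correctly via polarization for the self-adjoint operator $Re(T)$. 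The paper instead exploits self-adjointness directly: since $\langle Ty,y\rangle=\langle Re(T)y,y\rangle$, $x\in V_{T}$ gives $x\in V_{Re(T)}$, and then $v(Re(T))=\|Re(T)\|$ turns the chain $\|Re(T)\|=|\langle Re(T)x,x\rangle|\le\|Re(T)x\|\le\|Re(T)\|$ into an equality in Cauchy--Schwarz, whence $Re(T)x=\lambda x$ with no differentiability discussion and no degenerate case. The paper's route is shorter and also hands you the eigenvalue $\lambda$ with $|\lambda|=v(T)$ for free (this is exactly what Corollary \ref{real-vt} records); your route has the virtue of uniformity with the Crawford-number theorem and of using only the first-order optimality condition, but the price is the extra case split and the differentiability check you flag at the end.
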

\begin{proof}
	We first prove the necessary part of the theorem. Now, $\langle Ty,y\rangle=\langle Re(T)y,y\rangle$ for every $y\in \mathbb{H}.$ Therefore, $x\in V_T\Rightarrow x\in V_{Re(T)}.$ Let $\lambda=\langle Re(T)x,x\rangle.$ Then $|\lambda|=v(Re(T))=\|Re(T)\|,$ since $Re(T)$ is self adjoint. Now, $\|Re(T)\|=|\lambda|=|\langle Re(T)x,x\rangle|\leq \|Re(T)x\|\|x\|\leq \|Re(T)\|.$ Thus, $|\langle Re(T)x,x\rangle |= \|Re(T)x\|\|x\|\Rightarrow Re(T)x=\lambda x.$ Hence,  $x$ is an eigenvector of $Re(T).$ $(ii)$ follows from the definition of $V_T.$ This completes the proof of the necessary part of the theorem.\\
	Now, we prove the sufficient part of the theorem. Let $u\in S_{\mathbb{H}}.$ Then $u=ax+bz,$ for some $a,b\in \mathbb{R}$ and $z\in x^{\perp}\cap S_{\mathbb{H}}.$ Let $Re(T)x=\lambda x$ for some $\lambda\in \mathbb{R}.$ Then $\langle Tx,z\rangle+\langle Tz,x\rangle=\langle 2Re(T)x,z\rangle=\langle 2\lambda x,z\rangle=0.$ Now,
	\begin{eqnarray*}
		|\langle Tu,u\rangle |&=& |\langle aTx+bTz,ax+bz\rangle|\\
		&=& |a^2\langle Tx,x\rangle+b^2 \langle Tz,z\rangle|\\
		&=& |\langle Tx,x\rangle+b^2(\langle Tz,z\rangle-\langle Tx,x\rangle)|.
	\end{eqnarray*}
	We show that whatever be the sign of $\langle Tx,x\rangle,$  $|\langle Tu,u\rangle|\leq |\langle Tx,x\rangle|.$ Let $\langle Tx,x\rangle \geq 0.$ Then
	\begin{eqnarray*}
		&&|\langle Tz,z\rangle|\leq |\langle Tx,x \rangle|\\
		&\Rightarrow &-\langle Tx,x \rangle \leq \langle Tz,z\rangle\leq \langle Tx,x \rangle\\
		&\Rightarrow& -2b^2\langle Tx,x\rangle\leq b^2(\langle Tz,z\rangle-\langle Tx,x\rangle)\leq 0\\
		&\Rightarrow& \langle Tx,x \rangle-2b^2\langle Tx,x \rangle \leq  \langle Tx,x \rangle+b^2( \langle Tz,z\rangle-\langle Tx,x\rangle)\leq \langle Tx,x \rangle\\
		&\Rightarrow &-\langle Tx,x \rangle \leq \langle Tx,x \rangle+b^2( \langle Tz,z\rangle-\langle Tx,x\rangle)\leq \langle Tx,x \rangle\\
		&\Rightarrow &-\langle Tx,x \rangle \leq \langle Tu,u \rangle \leq \langle Tx,x \rangle\\
		&\Rightarrow&|\langle Tu,u\rangle|\leq |\langle Tx,x\rangle|.
	\end{eqnarray*}
	Similarly if $\langle Tx,x\rangle \leq 0,$ then $|\langle Tu,u\rangle|\leq |\langle Tx,x\rangle|.$ Therefore, $x\in V_T.$
\end{proof}

In case of a complex Hilbert space, the above characterization theorem assumes the following form:

\begin{theorem} \label{com-vt}
	Let $\mathbb{H}$ be a complex Hilbert space. Let $T\in L(\mathbb{H})$ and $x\in S_{\mathbb{H}}.$ Then $x\in V_T$ if and only if  $x$ is an eigenvector of $\langle Re(T)x,x\rangle Re(T)+\langle Im(T)x,x\rangle Im(T)$ corresponding to the eigenvalue $v^2(T).$
\end{theorem}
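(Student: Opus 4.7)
My plan is to adapt the proof of Theorem \ref{com-ct} nearly verbatim, noting that the $V_T$ characterization and the $c_T$ characterization are two faces of the same first-order condition on the smooth functional
$$\psi_y(t)=\frac{|\langle T(x+ty),x+ty\rangle|^2}{\|x+ty\|^4};$$
in the Crawford case $t=0$ is a minimum, in the numerical radius case it is a maximum, but in either case $\psi_y'(0)=0$ delivers the same eigenvector relation.

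For the necessary direction, I would first dispose of the trivial case $v(T)=0$: on a complex Hilbert space $v(T)\ge \tfrac12\|T\|$, so $T=0$ and the stated eigenvalue equation holds immediately. Otherwise $|\langle Tx,x\rangle|=v(T)>0$ guarantees that $\psi_y$ is differentiable in a neighbourhood of $0$ for every $y\in S_{\mathbb H}$. Since $x\in V_T$ means $t=0$ is a maximizer of each such $\psi_y$, the same computation carried out in the proof of Theorem \ref{com-ct} gives
$$\big(\langle Re(T)x,x\rangle Re(T)+\langle Im(T)x,x\rangle Im(T)\big)x=\lambda x$$
for some scalar $\lambda$. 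Taking the inner product with $x$ then pins down $\lambda=\langle Re(T)x,x\rangle^2+\langle Im(T)x,x\rangle^2=|\langle Tx,x\rangle|^2=v^2(T)$, as required.

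For the sufficient direction I would pair the displayed eigenvector equation (with $\lambda=v^2(T)$) with $x$ to read off $|\langle Tx,x\rangle|^2=v^2(T)$, whence $|\langle Tx,x\rangle|=v(T)$ and $x\in V_T$.

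The main obstacle is essentially nil, since the hard computation --- evaluating $\psi_y'(0)$ and collapsing the cross terms into $Re(T)$ and $Im(T)$ via their self-adjointness --- is exactly the one already carried out for Theorem \ref{com-ct}. The only point of genuine novelty is the observation that the \emph{same} critical equation characterizes both $c_T$ and $V_T$, with only the eigenvalue ($c^2(T)$ versus $v^2(T)$) distinguishing the two extremal sets.
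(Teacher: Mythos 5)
Your proposal is correct and takes essentially the same route as the paper, which itself proves this theorem by the one-line remark that it follows as in Theorem \ref{com-ct}, replacing the minimum of $\psi_y$ at $t=0$ by a maximum. Your explicit handling of the degenerate case $v(T)=0$ (forcing $T=0$ via $v(T)\geq\tfrac{1}{2}\|T\|$) is a small point the paper glosses over, but it does not change the argument.
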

\begin{proof}
	The proof follows similarly as the proof of Theorem \ref{com-ct}.
\end{proof}

If the Hilbert space is real, then one can easily obtain the following result.

\begin{cor}\label{real-vt}
	Let $\mathbb{H}$ be a real Hilbert space. Let $T\in L(\mathbb{H})$ and $x\in S_{\mathbb{H}}.$ Then $x\in V_T$ if and only if  $x$ is an eigenvector of $ Re(T)$ corresponding to the eigenvalue $\lambda$ with  $| \lambda |=v(T).$
\end{cor}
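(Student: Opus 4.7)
The plan is to deduce this corollary directly from Theorem \ref{Th-vt}, using the fact that in a real Hilbert space the skew-symmetric part $(T-T^*)/2$ contributes zero to the quadratic form $y\mapsto\langle Ty,y\rangle$. Concretely, for every $y\in\mathbb{H}$ one has $\langle Ty,y\rangle=\langle Re(T)y,y\rangle$, so the numerical radius of $T$ coincides with the numerical radius of the self-adjoint operator $Re(T)$, which in turn equals $\|Re(T)\|$.

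For the necessary direction, I would take $x\in V_T$ and invoke Theorem \ref{Th-vt}(i) to conclude that $x$ is an eigenvector of $Re(T)$, say $Re(T)x=\lambda x$ for some $\lambda\in\mathbb{R}$. Then I would simply compute
\[
|\lambda|=|\langle Re(T)x,x\rangle|=|\langle Tx,x\rangle|=v(T),
\]
where the last equality uses $x\in V_T$ together with the Hilbert space identification $x^*(\cdot)=\langle \cdot,x\rangle$. This gives the claim.

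For the sufficient direction, suppose $Re(T)x=\lambda x$ with $|\lambda|=v(T)$. Then
\[
|\langle Tx,x\rangle|=|\langle Re(T)x,x\rangle|=|\lambda|=v(T),
\]
so $x\in V_T$ by definition. In particular, this direction does not even require Theorem \ref{Th-vt}; it follows straight from the definitions.

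There is essentially no obstacle in this argument; the only thing to verify is that condition (ii) of Theorem \ref{Th-vt} is not an obstruction, and this is immediate since (ii) is automatic for any $x$ satisfying $|\langle Tx,x\rangle|=v(T)$. The corollary is therefore a clean specialization of Theorem \ref{Th-vt} to the real self-adjoint setting of $Re(T)$.
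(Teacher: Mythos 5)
Your argument is correct, and since the paper leaves this corollary as an immediate consequence ("one can easily obtain"), your derivation from Theorem \ref{Th-vt} together with the identities $\langle Tx,x\rangle=\langle Re(T)x,x\rangle$ and the uniqueness of the norming functional $\langle\cdot,x\rangle$ in a Hilbert space is exactly the intended route. The observation that condition (ii) of Theorem \ref{Th-vt} is automatic once $|\langle Tx,x\rangle|=v(T)$, and that the sufficiency direction needs only the definition of $V_T$, is accurate.
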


Analogous to Theorem \ref{th-ctcom}, it is possible to obtain another characterization for the numerical radius attainment set of  a bounded linear operator on a complex Hilbert space.

\begin{theorem}
	Let $\mathbb{H}$ be a complex Hilbert space. Let $T\in L(\mathbb{H}).$ Let $x\in S_{\mathbb{H}}.$ Then $x\in V_T$ if and only if for arbitrary $y\in x^{\perp}\cap S_{\mathbb{H}},$ either of the following conditions holds:\\
	(i) $\langle Ty,x\rangle=\overline{\langle Tx,y\rangle}$ and $||a|^2\langle Tx,x\rangle+|b|^2\langle Ty,y\rangle+2Re\{a\overline{b}\langle Tx,y\rangle\}|\leq |\langle Tx,x\rangle|,$ for all $a,b\in \mathbb{C}$ with $|a|^2+|b|^2=1.$\\
	(ii) $\langle Ty,x\rangle=-\overline{\langle Tx,y\rangle}$ and $||a|^2\langle Tx,x\rangle+|b|^2\langle Ty,y\rangle+2iIm\{a\overline{b}\langle Tx,y\rangle\}|\leq |\langle Tx,x\rangle|,$ for all $a,b\in \mathbb{C}$ with $|a|^2+|b|^2=1.$
\end{theorem}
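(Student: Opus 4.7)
The plan is to mirror the proof of Theorem \ref{th-ctcom}, reversing the direction of every relevant inequality. The underlying dichotomy $\langle Ty,x\rangle = \pm\overline{\langle Tx,y\rangle}$ is driven by the algebraic form of the cross-term $a\overline{b}\langle Tx,y\rangle + \overline{a}b\langle Ty,x\rangle$ and stays intact; only the bound on $||a|^2\langle Tx,x\rangle + |b|^2\langle Ty,y\rangle + 2Re\{a\overline{b}\langle Tx,y\rangle\}|$ (respectively its imaginary analogue) flips from $\geq |\langle Tx,x\rangle|$ to $\leq |\langle Tx,x\rangle|$, since $V_T$ encodes a supremum whereas $c_T$ encodes an infimum.

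For the sufficient part, I would write an arbitrary $u \in S_\mathbb{H}$ as $u = ax + by$ with $y \in x^\perp \cap S_\mathbb{H}$ and $|a|^2 + |b|^2 = 1$, expand
\[
\langle Tu,u\rangle = |a|^2\langle Tx,x\rangle + |b|^2\langle Ty,y\rangle + a\overline{b}\langle Tx,y\rangle + \overline{a}b\langle Ty,x\rangle,
\]
and collapse the last two terms via (i) or (ii) to $2Re\{a\overline{b}\langle Tx,y\rangle\}$ or $2iIm\{a\overline{b}\langle Tx,y\rangle\}$ respectively. The hypothesised inequality then yields $|\langle Tu,u\rangle| \leq |\langle Tx,x\rangle|$ in either case. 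Taking supremum over $u \in S_\mathbb{H}$ gives $v(T) \leq |\langle Tx,x\rangle|$, forcing $x \in V_T$.

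For the necessary part, I assume $x \in V_T$; the case $\langle Tx,x\rangle = 0$ is trivial (then $v(T) = 0$ and both conditions hold vacuously), so assume $|\langle Tx,x\rangle| > 0$ and fix $y \in x^\perp \cap S_\mathbb{H}$. The first step is to derive the dichotomy $\langle Ty,x\rangle = \pm\overline{\langle Tx,y\rangle}$ by contradiction. If neither equality holds, then, by the same combinatorial observation used in Theorem \ref{th-ctcom}, one selects $a, b \in \mathbb{C}$ with $|a|^2 + |b|^2 = 1$ and
\[
\arg\bigl\{a\overline{b}\langle Tx,y\rangle + \overline{a}b\langle Ty,x\rangle\bigr\} = \arg\{\langle Tx,x\rangle\}
\]
--- the target phase being $\arg\{\langle Tx,x\rangle\}$ here, rather than $\pi + \arg\{\langle Tx,x\rangle\}$ as in Theorem \ref{th-ctcom}, so that the cross-term reinforces $\langle Tx,x\rangle$ instead of cancelling it. For $|b|$ sufficiently small the cross-term dominates $|b|^2|\langle Ty,y\rangle|$ in modulus, so a triangle-inequality estimate gives $|\langle T(ax+by), ax+by\rangle| > |\langle Tx,x\rangle|$, contradicting $x \in V_T$. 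Once the dichotomy is secured, the inequalities in (i) and (ii) are just $|\langle Tu,u\rangle| \leq v(T) = |\langle Tx,x\rangle|$ rewritten via the cross-term collapse from the sufficiency step.

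The main obstacle is the phase-alignment step: one must rotate the cross-term to reinforce, rather than oppose, $\langle Tx,x\rangle$. Since the parametric family $\{a\overline{b}\alpha + \overline{a}b\beta : |a|^2 + |b|^2 = 1\}$ in $\mathbb{C}$ is the same object analysed in Theorem \ref{th-ctcom}, the same phase-selection argument transfers with the sole change being the new target phase $\arg\{\langle Tx,x\rangle\}$. All other manipulations are straightforward adaptations of the corresponding steps in Theorems \ref{th-ctcom} and \ref{com-vt}.
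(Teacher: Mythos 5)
The gap is exactly at the step you flag as ``the main obstacle,'' and it cannot be repaired: the phase--alignment claim is false, and in fact so is the necessity direction of the statement itself. Write $\alpha=\langle Tx,y\rangle$, $\beta=\langle Ty,x\rangle$ and $w=a\overline{b}$, so the cross-term is $L(w)=\alpha w+\beta\overline{w}$. As a real-linear map of $w$, $L$ has determinant $|\alpha|^2-|\beta|^2$; when $|\alpha|=|\beta|$ its range degenerates to a single line through the origin, so only two arguments (differing by $\pi$) are attainable and there is no way to force $\arg L(w)=\arg\langle Tx,x\rangle$. Since $|\alpha|=|\beta|$ is strictly weaker than $\beta=\pm\overline{\alpha}$, the contradiction argument breaks down precisely in this degenerate case. (The same gap is present in the paper's proof of Theorem~2.4, which you are importing wholesale.)

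Concretely, the necessity direction is false. Let $\mathbb{H}=\mathbb{C}^2$, $T_0=\begin{pmatrix}1&1\\-1&-1\end{pmatrix}$ and $T=e^{i\pi/4}T_0$. For $u=(a,b)$ with $|a|^2+|b|^2=1$ one computes $\langle T_0u,u\rangle=|a|^2-|b|^2-2i\,\mathrm{Im}(a\overline{b})$, whence $|\langle T_0u,u\rangle|^2=\bigl(1-4|a|^2|b|^2\bigr)+4\,\mathrm{Im}(a\overline{b})^2\leq 1$, with equality at $u=e_1$. Hence $v(T)=v(T_0)=1=|\langle Te_1,e_1\rangle|$ and $x=e_1\in V_T$. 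But for any $y=e^{i\psi}e_2\in x^{\perp}\cap S_{\mathbb{H}}$ one has $\langle Ty,x\rangle=e^{i(\psi+\pi/4)}$ while $\overline{\langle Tx,y\rangle}=-e^{i(\psi-\pi/4)}$, so $\langle Ty,x\rangle=-i\,\overline{\langle Tx,y\rangle}\neq\pm\overline{\langle Tx,y\rangle}$: neither (i) nor (ii) holds. The correct first-order necessary condition, obtained by demanding $\mathrm{Re}\{\overline{\langle Tx,x\rangle}\,L(w)\}=0$ for all $w$, is $\langle Ty,x\rangle=-e^{2i\arg\langle Tx,x\rangle}\,\overline{\langle Tx,y\rangle}$, which collapses to the stated dichotomy only when $\langle Tx,x\rangle$ is real or purely imaginary (as happens automatically in the real case). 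Your sufficiency argument is correct; it is the necessity half --- and the theorem as stated --- that fails.
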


\section{Intersection property}
In this section, we study the intersection properties of the above mentioned subsets of the unit sphere of a normed space $ \mathbb{X}, $ in connection with a bounded linear operator $ T $ defined on $ \mathbb{X}. $ Some of the results follow directly from our study in the previous section. Therefore, we omit the proofs whenever they follow immediately as a consequence of our results in the previous section. We begin with a characterization of the intersection of the norm attainment set $ M_T, $ or the minimum norm attainment set $ m_T, $ and the numerical radius attainment set $ V_T $ of a bounded linear operator $ T $ on a Hilbert space $ \mathbb{H}. $ First we treat the case of real Hilbert spaces.

\begin{theorem}
	Let $\mathbb{H}$ be a real Hilbert space and $T\in L(\mathbb{H}).$ Then $M_T\cap V_T\neq \emptyset$ if and only if there exists $x\in S_{\mathbb{H}}$ such that for any $z\in x^{\perp}\cap S_{\mathbb{H}},$ the following conditions hold:\\
	(i) $x$ is an eigenvector of $Re(T).$\\
	(ii) $|\langle Tx,x\rangle| \geq |\langle Tz,z\rangle|,$\\
	(iii) $\langle Tx,Tz\rangle=0,$\\
	(iv) $\|Tx\|\geq \|Tz\|.$  
\end{theorem}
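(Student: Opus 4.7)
The plan is to decouple the characterization into two independent pieces: conditions (i) and (ii) will account for membership in $V_T$, while conditions (iii) and (iv) will account for membership in $M_T$. Once this decoupling is established, the equivalence with $M_T \cap V_T \neq \emptyset$ is immediate.

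First I would invoke Theorem \ref{Th-vt} directly: a unit vector $x \in S_{\mathbb{H}}$ lies in $V_T$ if and only if conditions (i) and (ii) of the statement hold for every $z \in x^{\perp} \cap S_{\mathbb{H}}$. So there is nothing to prove for that half; I would simply cite Theorem \ref{Th-vt}.

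Next I would verify that $x \in M_T$ if and only if, for every $z \in x^{\perp} \cap S_{\mathbb{H}}$, conditions (iii) and (iv) hold. For the sufficiency, given any $u \in S_{\mathbb{H}}$ I would write $u = ax + bz$ with $a, b \in \mathbb{R}$, $a^2 + b^2 = 1$, and $z \in x^{\perp} \cap S_{\mathbb{H}}$; then using $\langle Tx, Tz\rangle = 0$,
\begin{equation*}
\|Tu\|^2 = a^2\|Tx\|^2 + b^2\|Tz\|^2 \leq (a^2 + b^2)\|Tx\|^2 = \|Tx\|^2,
\end{equation*}
so $\|Tx\| = \|T\|$. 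For the necessity, $x \in M_T$ is equivalent to $T^*Tx = \|T\|^2 x$ in the Hilbert space setting (this is precisely \cite[Th. 2.1]{S}); from this, for every $z \in x^{\perp}$ we obtain $\langle Tx, Tz\rangle = \langle T^*Tx, z\rangle = \|T\|^2 \langle x, z\rangle = 0$, which is (iii), while (iv) follows from $\|Tx\| = \|T\| \geq \|Tz\|$ for every $z \in S_{\mathbb{H}}$.

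Combining the two characterizations, the existence of $x \in S_{\mathbb{H}}$ satisfying (i)--(iv) for every $z \in x^{\perp} \cap S_{\mathbb{H}}$ is precisely equivalent to the existence of $x \in M_T \cap V_T$. There is no genuine obstacle in the argument; the only mild subtlety is ensuring that in the sufficiency direction for $M_T$ one does not need (iii) and (iv) simultaneously for the same decomposition — but the decomposition $u = ax + bz$ with a single $z \in x^{\perp} \cap S_{\mathbb{H}}$ is enough, and the hypotheses are assumed to hold for every such $z$. I would conclude by observing that the proof is a straightforward consequence of Theorem \ref{Th-vt} together with the Hilbert-space characterization of the norm attainment set, and therefore omit the routine verification.
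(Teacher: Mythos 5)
Your proposal is correct and follows essentially the same route as the paper, whose proof is simply the one-line citation ``The result follows from Theorem \ref{Th-vt} and \cite[Th. 2.1]{S}'': conditions (i)--(ii) characterize $V_T$ by Theorem \ref{Th-vt}, and conditions (iii)--(iv) characterize $M_T$ by \cite[Th. 2.1]{S}. The only difference is that you supply a (correct) self-contained verification of the $M_T$ characterization rather than citing it outright.
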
	

\begin{proof}
	The result follows from Theorem \ref{Th-vt} and \cite[Th. 2.1]{S}. 
\end{proof}

\begin{theorem}
	Let $\mathbb{H}$ be a real Hilbert space and $T\in L(\mathbb{H}).$ Then $m_T\cap V_T\neq \emptyset$ if and only if there exists $x\in S_{\mathbb{H}}$ such that for any $z\in x^{\perp}\cap S_{\mathbb{H}},$ the following conditions hold:\\
	(i) $x$ is an eigenvector of $Re(T).$\\
	(ii) $|\langle Tx,x\rangle| \geq |\langle Tz,z\rangle|,$\\
	(iv) $\langle Tx,Tz\rangle=0,$\\
	(v) $\|Tx\|\leq \|Tz\|.$ 
\end{theorem}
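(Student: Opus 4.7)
The plan is to deduce this theorem as an immediate consequence of Theorem \ref{Th-vt}, which characterizes $V_T$ in a real Hilbert space by conditions (i) and (ii), paired with the analogous Hilbert space characterization of the minimum norm attainment set $m_T$ by conditions (iv) and (v). The $m_T$ characterization is the natural dual of the norm attainment characterization \cite[Th. 2.1]{S} with the inequality reversed, and is also covered by \cite{SPM} in the broader Banach space setting, so it is legitimate to invoke here as a known fact.

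Before stating the main argument I would record, or re-derive, the following claim in a real Hilbert space: $x\in m_T$ if and only if $\langle Tx,Tz\rangle=0$ and $\|Tx\|\leq \|Tz\|$ for every $z\in x^{\perp}\cap S_{\mathbb{H}}$. For necessity, I would apply the standard first-variation trick to the smooth function $\phi_z(t)=\|T(x+tz)\|^2/\|x+tz\|^2$; using $\langle x,z\rangle=0$, this rewrites as $(\|Tx\|^2+2t\langle Tx,Tz\rangle+t^2\|Tz\|^2)/(1+t^2)$, so minimality at $t=0$ forces $\phi_z'(0)=2\langle Tx,Tz\rangle=0$, and then $\phi_z(t)\geq \phi_z(0)$ for all $t$ reduces to $\|Tz\|\geq \|Tx\|$. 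For sufficiency, any $u\in S_{\mathbb{H}}$ decomposes as $u=ax+bz$ with $z\in x^{\perp}\cap S_{\mathbb{H}}$ and $a^2+b^2=1$; under the orthogonality condition (iv) this gives $\|Tu\|^2=a^2\|Tx\|^2+b^2\|Tz\|^2$, which by (v) is at least $(a^2+b^2)\|Tx\|^2=\|Tx\|^2$.

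With this characterization in hand the theorem is immediate. For necessity, any $x\in m_T\cap V_T$ belongs to $V_T$, giving (i) and (ii) by Theorem \ref{Th-vt}, and belongs to $m_T$, giving (iv) and (v) by the claim above. Conversely, any $x\in S_{\mathbb{H}}$ satisfying (i), (ii), (iv), (v) lies in $V_T$ by Theorem \ref{Th-vt} and in $m_T$ by the same claim, so the intersection is nonempty.

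Since the result is purely a reduction to two independently established characterizations, I do not anticipate any genuine obstacle; the only point requiring care is quoting the correct form of the $m_T$ characterization on a real Hilbert space. This is presumably why the authors, in line with their stated convention at the beginning of the section, are content to leave the proof as an assembly of Theorem \ref{Th-vt} and the minimum-norm analogue of \cite[Th. 2.1]{S}.
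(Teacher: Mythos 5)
Your proposal is correct and follows exactly the paper's route: the authors also prove this by combining Theorem \ref{Th-vt} with the known characterization of $m_T$ (citing \cite[Th.\ 2.4]{SPM}), which is precisely the claim you re-derive. Your self-contained verification of the $m_T$ characterization via the first-variation argument and the orthogonal decomposition is accurate and simply fills in the cited result.
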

	
\begin{proof}
	The result follows from Theorem \ref{Th-vt} and \cite[Th. 2.4]{SPM}. 
\end{proof}

If the underlying field is either real or complex, we have the following result:

\begin{theorem}\label{theorem-general}
	Let $\mathbb{H}$ be a Hilbert space. Let $T\in L(\mathbb{H}).$ Then $M_T\cap V_T\neq \emptyset$ if and only if the following conditions hold:\\
	$(i)$ $\|T\|=\|T|_Y\|,$\\
	$(ii)$ $v(T)=v(T|_Y),\\$
	$(iii)$ $M_{T|_Y}\cap V_{T|_{\mathbb{Y}}}\neq \emptyset,$ where $Y=(\ker(T))^\perp.$
\end{theorem}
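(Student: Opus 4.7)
The plan is to establish the equivalence by a direct two-sided argument, hinging on the Hilbert-space fact that every element of $M_T$ lies in $Y=(\ker T)^{\perp}$. To see this, given $x \in M_T$ I will use the orthogonal decomposition $\mathbb{H}=Y\oplus \ker T$ to write $x=y+z$ with $y\in Y$ and $z\in \ker T$; since $Tx=Ty$, if $z\neq 0$ then the unit vector $y/\|y\|\in S_Y$ would satisfy $\|T(y/\|y\|)\|=\|Tx\|/\|y\|>\|T\|$, a contradiction. Hence $x\in S_Y$. (If $T=0$ the theorem is trivial and can be dispatched separately.)

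For the forward direction, I fix any $x\in M_T\cap V_T$. By the observation above, $x\in S_Y$, so $\|T|_Y\|\geq \|Tx\|=\|T\|$; the reverse inequality being automatic, condition $(i)$ follows. Similarly, the chain $v(T)=|\langle Tx,x\rangle|\leq v(T|_Y)\leq v(T)$, where the last inequality uses $S_Y\subset S_{\mathbb{H}}$, yields $(ii)$. The same $x$ then realizes both the norm and the numerical radius of $T|_Y$, giving $(iii)$.

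For the reverse direction, I pick any $x\in M_{T|_Y}\cap V_{T|_Y}$ supplied by $(iii)$. Since $x\in S_Y\subset S_{\mathbb{H}}$, conditions $(i)$ and $(ii)$ respectively force $\|Tx\|=\|T|_Y\|=\|T\|$ and $|\langle Tx,x\rangle|=v(T|_Y)=v(T)$, so $x\in M_T\cap V_T$.

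The only genuinely non-trivial ingredient is the inclusion $M_T\subset (\ker T)^{\perp}$, which is a purely Hilbert-space phenomenon stemming from the orthogonal decomposition. Once this is in hand, the rest of the argument is essentially bookkeeping via the inclusion $S_Y\subset S_{\mathbb{H}}$, so I do not anticipate any serious obstacle beyond setting up this initial reduction cleanly.
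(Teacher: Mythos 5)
Your proposal is correct and follows essentially the same route as the paper: the crux in both is that any norm-attaining unit vector must lie in $Y=(\ker T)^{\perp}$ (you argue via renormalizing the $Y$-component, the paper via $\|T\|=\|Tx\|\leq\|T\|\,\|x\|$ forcing $\|x\|=1$), after which both directions reduce to the inclusion $S_Y\subset S_{\mathbb{H}}$. Your explicit flagging of the degenerate case $T=0$ is a small point the paper leaves implicit.
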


\begin{proof}
First we prove the sufficient part of the theorem. Let $y\in M_{T|_{\mathbb{Y}}}\cap V_{T|_{\mathbb{Y}}}$. Then, $\|T|_{\mathbb{Y}}y\|=\|T|_{\mathbb{Y}}\|=\|T\|.$ This implies that $y\in M_T.$ Also, $|\langle T|_{\mathbb{Y}}y,y\rangle|=v(T|_{\mathbb{Y}})=v(T)$. This implies that $y\in V_T.$ Therefore, $M_T \cap V_T \neq \emptyset$.\\ 
	For the necessary part, suppose that $e\in M_T\cap V_T.$ Now, $H=Y\oplus Y^\perp,$ where $Y=(\ker(T))^\perp.$ Clearly, $Y^\perp=\ker(T).$ Let $e=x+y,$ where $x\in Y,y\in Y^\perp.$ Then $1=\|e\|^2=\|x\|^2+\|y\|^2.$ Thus, $\|x\|\leq 1.$ Now, $\|T\|=\|Te\|=\|Tx+Ty\|=\|Tx\|.$ Thus, $\|x\|=1$ and $y=0.$ This gives that $e=x\in Y.$ Hence, $\|T\|=\|Te\|\leq\|T|_Y\|\leq \|T\|\Rightarrow \|T\|=\|T|_Y\|.$ Thus, $(i)$ is proved. Now, $v(T)=|\langle Te,e \rangle|\leq v(T|_Y)\leq v(T)\Rightarrow v(T)=v(T|_Y).$ Thus, $(ii)$ is proved. Clearly, $e\in M_{T|_Y}\cap V_{T|Y}.$ Thus, $M_{T|_Y}\cap V_{T|Y}\neq\emptyset.$ This completes the proof of the theorem.   
\end{proof}
 
On the other hand, in case of a real Hilbert space, we also have the following characterization: 

\begin{theorem}\label{th-mtvt}
Let $\mathbb{H}$ be a real Hilbert space and $T\in L(\mathbb{H}).$ Then $M_T\cap V_T\neq\emptyset$ if and only if either of the following conditions holds:\\
(i) there exists $x\in S_{\mathbb{H}}$ such that $Tx=\pm \|T\|x.$\\
(ii) there exists a two dimensional subspace $Y$ of $(ker(T))^{\perp}$ such that $T:Y\to Y$ is a scalar multiple of isometry, $\|T\|=\|T|_{Y}\|$ and $v(T)=v(T|_Y).$	
\end{theorem}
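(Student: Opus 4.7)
The plan is to combine the characterization of $V_T$ from Theorem \ref{Th-vt} with the Hilbert space characterization of $M_T$ in \cite[Th.~2.1]{S}, according to which $x \in M_T$ is equivalent to $\langle Tx, Tz\rangle = 0$ and $\|Tz\| \leq \|Tx\|$ for every $z \in x^\perp \cap S_{\mathbb{H}}$. For the forward direction I would pick $x \in M_T \cap V_T$ and invoke Corollary \ref{real-vt} to get $Re(T)x = \lambda x$ with $|\lambda| = v(T)$, so that $\langle Tx, x\rangle = \lambda$ and $T^*x = 2\lambda x - Tx$. If $Tx$ is parallel to $x$, then $\|Tx\| = \|T\|$ forces $Tx = \pm\|T\|x$, landing in case (i).

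In the remaining case I would write $Tx = \lambda x + \mu e$ with $\mu > 0$ and $e \in x^\perp \cap S_{\mathbb{H}}$, and set $Y = \mathrm{span}\{x, e\}$. First, to place $Y$ inside $(\ker T)^\perp$, I note that $x \in (\ker T)^\perp$ is standard from $x \in M_T$, while for $e$ the identity $\mu e = \lambda x - T^*x$ combined with $\langle T^*x, z\rangle = \langle x, Tz\rangle = 0$ for every $z \in \ker T$ yields $e \perp \ker T$. Next, to establish $T$-invariance of $Y$, I would decompose $Te = \alpha x + \beta e + w$ with $w \in Y^\perp$; the orthogonality $\langle Tx, Te\rangle = 0$ from \cite[Th.~2.1]{S}, together with $\langle x, Te\rangle = \langle T^*x, e\rangle = -\mu$, pins down $\alpha = -\mu$ and $\beta = \lambda$, and then the sharp norm bound $\|Te\|^2 \leq \|T\|^2 = \lambda^2 + \mu^2$ forces $w = 0$. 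The matrix of $T|_Y$ in the basis $\{x, e\}$ becomes $\sqrt{\lambda^2 + \mu^2}$ times a planar rotation, so $T|_Y$ is a scalar multiple of an isometry with $\|T|_Y\| = \|T\|$; evaluating on a general unit $u = \cos\theta\, x + \sin\theta\, e$ yields $\langle Tu, u\rangle = \lambda$, so $v(T|_Y) = |\lambda| = v(T)$.

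For the converse I would treat each case separately. Case (i) is immediate: $\|Tx\| = \|T\|$ places $x \in M_T$, and the sandwich $\|T\| = |\langle Tx, x\rangle| \leq v(T) \leq \|T\|$ places $x \in V_T$. In case (ii), I would write $T|_Y = cR$ with $R$ an isometry of $Y$ and $|c| = \|T|_Y\| = \|T\|$. Since $\dim Y = 2$, either $R$ admits a real eigenvector $y_0$, which gives $Ty_0 = \pm\|T\|y_0$ and reduces to (i), or $R$ is a planar rotation by some angle $\theta$, in which case $\langle Ry, y\rangle = \cos\theta$ for every unit $y \in Y$; then every unit $y \in Y$ satisfies $\|Ty\| = |c| = \|T\|$ and $|\langle Ty, y\rangle| = |c\cos\theta| = v(T|_Y) = v(T)$, so $y \in M_T \cap V_T$.

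The main obstacle will be engineering the two-dimensional subspace $Y$ so that it sits inside $(\ker T)^\perp$, is $T$-invariant, and restricts $T$ to a scalar multiple of an isometry, all at once. The delicate step is the $T$-invariance: it is the combination of the norm-attainment orthogonality $\langle Tx, Tz\rangle = 0$ with the sharp norm bound $\|Tz\| \leq \|Tx\|$ that annihilates the component of $Te$ lying outside $Y$ and collapses the matrix of $T|_Y$ to its scaled-rotation form.
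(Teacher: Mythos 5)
Your proposal is correct and follows essentially the same route as the paper: split on whether $Tx$ is parallel to $x$, take $Y=\mathrm{span}\{x,Tx\}$, and combine the eigenvector property of $Re(T)$ from Theorem \ref{Th-vt} with the characterization of $M_T$ in \cite[Th.~2.1]{S} to force $T|_Y$ to be a scaled rotation, with the converse handled case by case exactly as the paper does. The only real difference is in execution of the middle step: the paper introduces a minimum-norm attainment vector $y\in m_{T|_Y}$ and cites \cite[Th.~3.1, Th.~2.4]{SPM} to pin down the matrix of $T|_Y$, whereas you compute $T^*x=\lambda x-\mu e$ directly and annihilate the component of $Te$ outside $Y$ via the bound $\|Te\|\le\|Tx\|$, which is a slightly more self-contained way of reaching the same conclusion.
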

\begin{proof}
	First we prove the necessary part of the theorem. Let $M_T\cap V_T\neq\emptyset.$ Suppose that $x\in  M_T\cap V_T.$ If $\{Tx,x\}$ is linearly dependent, then there exists $\lambda\in \mathbb{R}$ such that $Tx=\lambda x.$ Now, $x\in M_T\Rightarrow \|T\|=|\lambda|.$ Thus, in this case $(i)$ holds. If $\{Tx,x\}$ is linearly independent, then consider $Y= span\{Tx,x\}.$ $x\in M_T\cap V_T\cap Y\Rightarrow x\in M_{T|_Y}\cap V_{T|_Y}.$ Let $y\in m_{T|_Y}.$ Then by \cite[Th. 3.1, Th. 2.4]{SPM}, we have, $x\perp y$ and $Tx\perp Ty.$  Suppose that $Tx=\alpha x+\beta y,$ for some $\alpha,\beta\in \mathbb{R}.$ Then $\|T\|=\sqrt{\alpha^2+\beta^2.}$ Since  $\{Tx,x\}$ is linearly independent, $\beta \neq 0.$  Since, $Tx\perp Ty,$ there exists $z\in Y^{\perp}\cap S_{\mathbb{H}}$ such that $Ty=\mu(\beta x-\alpha y)+\gamma z,$ for some $\mu,\gamma \in \mathbb{R}.$ Thus, $m(T|_Y)^2=\|Ty\|^2=\mu^2(\alpha^2+\beta^2)+\gamma^2=\mu^2\|T\|^2+\gamma^2.$ So $\mu^2=\frac{m(T|_Y)^2-\gamma^2}{\|T\|^2}\leq 1.$ Since $x\in V_T,$ from Theorem \ref{Th-vt}, we have, $x$ is an eigenvector of $Re(T).$ Hence, $\langle Re(T)x,y\rangle=0\Rightarrow \langle Tx,y\rangle+\langle Ty,x\rangle=0\Rightarrow \beta+\mu\beta=0\Rightarrow1+\mu=0,$ since $\beta\neq 0.$ Therefore, $|\mu|=1\Rightarrow \frac{m(T|_Y)^2-\gamma^2}{\|T\|^2}=1\Rightarrow \|T\|^2=m(T|_Y)^2-\gamma^2\leq m(T|_Y)^2\leq \|T\|^2.$ Thus, $m(T|_Y)=\|T\|=\|T|_Y\|$ and $\gamma=0.$ This gives that $T:Y\to Y$ is a scalar multiple of isometry, $\|T\|=\|T|_Y\|$ and $v(T)=v(T|_Y).$ We now show that $Y\subseteq (\ker(T))^{\perp}.$ Let $z\in S_Y.$ Then there exists $z_1\in \ker(T)$ and $z_2\in (\ker(T))^{\perp}$ such that $z=z_1+z_2.$ So $\|z_1\|^2+\|z_2\|^2=1.$ Now, $\|T\|=\|T|_Y\|=\|Tz\|=\|Tz_1+Tz_2\|=\|Tz_2\|\Rightarrow \|z_2\|=1$ and $z_1=0.$ Thus, $z\in (\ker(T))^{\perp}.$ This completes the proof of the necessary part of the theorem.\\
	Now we prove the sufficient part of the theorem. $(i)$ clearly implies that $x\in M_T\cap V_T.$ If $(ii)$ holds, then $M_{T|_Y}=S_Y\Rightarrow M_{T|_Y}\cap V_{T|_Y}\neq \emptyset.$ Let $z\in M_{T|_Y}\cap V_{T|_Y}.$ Then $\|T\|=\|T|_Y\|=\|Tz\|\Rightarrow z\in M_T$ and $v(T)=v(T|_Y)=|\langle Tz,z\rangle|\Rightarrow z\in V_T.$ Thus, $M_T\cap V_T\neq \emptyset.$ This completes the proof of the theorem.	
\end{proof}

In case of rank $ 1 $ bounded linear operators on a real Hilbert space, the following two corollaries are now evident.

\begin{cor}\label{cor:rank1}
Let $\mathbb{H}$ be a real Hilbert space. Let $T\in L(\mathbb{H})$ be of rank $1$. Then $M_T \cap V_T \neq \emptyset$ if and only if there exists $x\in S_{\mathbb{H}}$  such that $Tx=\pm \|T\| x.$
\end{cor}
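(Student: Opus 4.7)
The plan is to derive this corollary as a direct consequence of Theorem \ref{th-mtvt}. The key observation is that when $T$ has rank $1$, the subspace $(\ker(T))^{\perp}$ is one-dimensional, since $T$ restricted to $(\ker(T))^{\perp}$ is an injection onto the range of $T$, and dimension of the range equals the rank. Consequently, $(\ker(T))^{\perp}$ cannot contain any two-dimensional subspace, which immediately rules out condition $(ii)$ of Theorem \ref{th-mtvt}.

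For the sufficient direction, I would simply check that if $Tx = \pm \|T\|x$ for some $x \in S_{\mathbb{H}}$, then $\|Tx\| = \|T\|$ gives $x \in M_T$, and $|\langle Tx, x\rangle| = \|T\| \geq v(T) \geq |\langle Tx, x\rangle|$, which forces $v(T) = \|T\|$ and $x \in V_T$. Thus $x \in M_T \cap V_T$.

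For the necessary direction, I would invoke Theorem \ref{th-mtvt}: the assumption $M_T \cap V_T \neq \emptyset$ forces either condition $(i)$ or condition $(ii)$ of that theorem. Since condition $(ii)$ demands a two-dimensional subspace of $(\ker(T))^{\perp}$ and the latter space is only one-dimensional, condition $(ii)$ is impossible for a rank $1$ operator. Hence condition $(i)$ must hold, yielding the required $x \in S_{\mathbb{H}}$ with $Tx = \pm\|T\|x$.

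There is essentially no obstacle here; the only step requiring a moment's care is the dimension argument showing that $\dim (\ker(T))^{\perp} = 1$ for a rank $1$ operator, which follows from the fact that $T|_{(\ker T)^{\perp}}$ is an isomorphism onto the range of $T$. The corollary is thus a clean specialization of Theorem \ref{th-mtvt} to the rank $1$ setting.
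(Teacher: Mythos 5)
Your proposal is correct and follows exactly the route the paper intends: the corollary is stated as an evident consequence of Theorem \ref{th-mtvt}, and the whole content is precisely your observation that $\dim(\ker(T))^{\perp}=\operatorname{rank}(T)=1$ rules out condition $(ii)$ of that theorem, leaving condition $(i)$. Your direct verification of the sufficient direction is also fine (and in fact recovers the second corollary, $v(T)=\|T\|$, along the way).
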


\begin{cor}
Let $\mathbb{H}$ be a real Hilbert space. Let $T\in L(\mathbb{H})$ be of rank $1$. If $M_T \cap V_T \neq \emptyset$ then $v(T)=\|T\|$.
\end{cor}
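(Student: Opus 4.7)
The plan is to leverage Corollary \ref{cor:rank1}, which is the immediately preceding result and was established under exactly the same hypotheses. Since the statement to prove is a formal consequence of the rank $1$ characterization, I expect the argument to be short and essentially computational; there is no real obstacle, only the need to assemble the pieces cleanly.

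First I would invoke Corollary \ref{cor:rank1} directly: the hypothesis $M_T\cap V_T\neq\emptyset$ together with rank $T=1$ furnishes a unit vector $x\in S_{\mathbb{H}}$ satisfying $Tx=\pm\|T\|\,x$. Taking the inner product of both sides with $x$ and using $\|x\|=1$, I would deduce
\[
|\langle Tx,x\rangle|=\|T\|.
\]

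Next I would combine this with the two standard one-sided bounds that hold on any Hilbert space. Because $x\in S_{\mathbb{H}}$ pairs with itself via the Riesz representation to give a valid choice in the definition of $v(T)$, one has $|\langle Tx,x\rangle|\leq v(T)$, so $\|T\|\leq v(T)$. Conversely, the Cauchy--Schwarz inequality yields $|\langle Tz,z\rangle|\leq \|Tz\|\,\|z\|\leq \|T\|$ for every $z\in S_{\mathbb{H}}$, and taking the supremum gives $v(T)\leq \|T\|$. The two inequalities together force $v(T)=\|T\|$, completing the proof.
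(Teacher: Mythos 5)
Your argument is correct and is precisely the reasoning the paper intends: the corollary is stated as ``evident'' from Corollary \ref{cor:rank1}, and your route---extract $x$ with $Tx=\pm\|T\|x$, note $|\langle Tx,x\rangle|=\|T\|\leq v(T)$, and combine with the general bound $v(T)\leq\|T\|$---is exactly that deduction, spelled out. No gaps.
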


For rank $ 2 $ operators on a real Hilbert space, we have the following corollary:

\begin{cor}\label{cor:rank2}
Let $\mathbb{H}$ be a real Hilbert space. Let $T\in S_{L(\mathbb{H}))}$ be of rank $2$. Then $M_T \cap V_T \neq \emptyset$ if and only if either of the following is true:\\
$(i)$ there exists $x\in S_{\mathbb{H}}$  such that $Tx=\pm x$.\\
$(ii)$ $T$ is a partial isometry whose initial subspace and final subspace are identical.
\end{cor}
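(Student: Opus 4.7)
The plan is to deduce this corollary directly from Theorem \ref{th-mtvt} by exploiting the rank hypothesis, which forces $(\ker T)^{\perp}$ to be exactly two-dimensional. Since $T \in S_{L(\mathbb{H})}$, we have $\|T\| = 1$, so condition $(i)$ of Theorem \ref{th-mtvt}, which reads $Tx = \pm \|T\|x$, becomes precisely the statement of $(i)$ of the corollary. Thus it suffices to show that condition $(ii)$ of Theorem \ref{th-mtvt} is, under the rank $2$ hypothesis, equivalent to $T$ being a partial isometry whose initial and final subspaces coincide.

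For the forward direction, assume condition $(ii)$ of Theorem \ref{th-mtvt} holds, i.e., there exists a two-dimensional subspace $Y \subseteq (\ker T)^{\perp}$ with $T|_Y$ a scalar multiple of an isometry, $\|T\| = \|T|_Y\|$ and $v(T) = v(T|_Y)$. Because $T$ has rank $2$, the subspace $(\ker T)^{\perp}$ is two-dimensional, forcing $Y = (\ker T)^{\perp}$. Since $\|T|_Y\| = \|T\| = 1$, the scalar in "scalar multiple of isometry" has modulus $1$, so $T|_Y$ is an isometry; in particular $T$ is a partial isometry with initial subspace $Y$. Moreover, the range satisfies $T(\mathbb{H}) = T(Y) \subseteq Y$, and since $T|_Y$ is an isometry on the two-dimensional $Y$, $T(Y) = Y$; thus the final subspace equals the initial subspace.

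For the reverse direction, assume $T$ is a partial isometry whose initial subspace $Y := (\ker T)^{\perp}$ coincides with its final subspace $T(\mathbb{H})$. Since $T$ has rank $2$, $\dim Y = 2$, and $T|_Y : Y \to Y$ is an isometry, hence trivially a scalar multiple of an isometry, with $\|T|_Y\| = 1 = \|T\|$. It remains to verify $v(T) = v(T|_Y)$: for any $x \in S_{\mathbb{H}}$, decompose $x = x_1 + x_2$ with $x_1 \in Y$ and $x_2 \in \ker T$; then $Tx = Tx_1 \in Y$ and $x_2 \perp Y$, so $\langle Tx, x\rangle = \langle Tx_1, x_1\rangle$, which gives the bound $|\langle Tx, x\rangle| \leq \|x_1\|^2 \, v(T|_Y) \leq v(T|_Y)$. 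This yields $v(T) \leq v(T|_Y)$, and the reverse inequality is automatic. Applying Theorem \ref{th-mtvt} with this choice of $Y$ yields $M_T \cap V_T \neq \emptyset$.

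No part of the argument is really an obstacle, but the step requiring slight care is the identification of the "scalar multiple of isometry" condition with an honest isometry, which relies crucially on the normalization $\|T\| = 1$; without this normalization, one would only get that $T$ restricted to $(\ker T)^{\perp}$ is $\|T\|$ times a partial isometry, not a genuine partial isometry. The verification that $v(T) = v(T|_Y)$ also deserves explicit mention because it is the only nonalgebraic ingredient that ensures the partial isometry condition is not just sufficient for the norm condition but also for the numerical radius condition in Theorem \ref{th-mtvt}.
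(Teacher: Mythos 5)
Your proposal is correct and takes essentially the same route as the paper: the paper's proof is a one-line observation that, for a rank $2$ operator of norm one, condition $(ii)$ of Theorem \ref{th-mtvt} is equivalent to $T$ being a partial isometry with identical initial and final subspaces, and you have simply supplied the details of that equivalence (forcing $Y=(\ker T)^{\perp}$ by the rank hypothesis, using $\|T\|=1$ to upgrade the scalar multiple of an isometry to an isometry, and checking $v(T)=v(T|_Y)$). No gaps.
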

\begin{proof}
Observe that for a rank $2$ operator $T$ condition $(ii)$ of Theorem \ref{th-mtvt} is equivalent to $T$ is a partial isometry whose initial subspace and final subspace are identical.
\end{proof}
 
Our next result illustrates that the situation can be quite different in case of bounded linear operators on a normed space. This further illustrates the speciality of the geometric structure of Hilbert spaces, in contrast to general normed spaces. We recall that a normed space $ \mathbb{X} $ is said to be polygonal if the unit ball $ B_{\mathbb{X}} $ contains only finitely many extreme points.

\begin{theorem}
Let $\mathbb{X}$ be a finite-dimensional polygonal Banach space. Then there always exists an operator $T\in L(\mathbb{X})$ such that rank$(T)=1,~M_T\cap V_T\neq \emptyset$ but there does not exist any $x\in S_{\mathbb{X}}$ such that $Tx=\pm \|T\|x.$ 
\end{theorem}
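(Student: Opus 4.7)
The plan is to construct $T$ as a rank-one operator of the form $T(z) = f(z)\,y$ for suitable $f \in S_{\mathbb{X}^{*}}$ and $y \in S_{\mathbb{X}}$ chosen from the polytope structure of $B_{\mathbb{X}}$. Throughout I would assume $\dim \mathbb{X} \ge 2$, since the one-dimensional situation is degenerate.

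First I would select the linear data. Since $\mathbb{X}$ is finite-dimensional and polygonal, $B_{\mathbb{X}}$ is a centrally symmetric polytope; each of its facets is exposed by a unique extreme point of $B_{\mathbb{X}^{*}}$, and every vertex of $B_{\mathbb{X}}$ lies on at least $n = \dim \mathbb{X} \ge 2$ facets. Hence one can pick a vertex $x$ of $B_{\mathbb{X}}$ together with two distinct extreme points $f, g \in S_{\mathbb{X}^{*}}$ satisfying $f(x) = g(x) = 1$; the possibility $f = -g$ is automatically ruled out, since it would force $g(x) = -1$. Writing $M_{\phi} = \{z \in S_{\mathbb{X}} : \phi(z) = 1\}$ for the facet exposed by an extreme $\phi$, I would then pick $y \in M_g \setminus \bigl(M_f \cup (-M_f)\bigr)$. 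Such a $y$ exists because $M_g$ is distinct from both $M_f$ and $-M_f = M_{-f}$, so $M_g \cap M_f$ and $M_g \cap (-M_f)$ are proper subfaces of $M_g$, strictly lower-dimensional, whose union cannot exhaust $M_g$. By construction $\|y\| = 1$, $g(y) = 1$, and $|f(y)| < 1$.

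Next I would set $T(z) := f(z)\,y$ and verify the four claims of the theorem. Clearly $\operatorname{rank}(T) = 1$ and $\|T\| = \|f\|\,\|y\| = 1$. The vector $x$ satisfies $\|Tx\| = |f(x)|\,\|y\| = 1$, so $x \in M_T$; pairing it with the functional $g \in S_{\mathbb{X}^{*}}$, one has $g(x) = 1$ and $|g(Tx)| = |f(x)|\,|g(y)| = 1$, which forces $v(T) = \|T\| = 1$ and $x \in V_T$. Thus $M_T \cap V_T \neq \emptyset$. On the other hand, if some $z \in S_{\mathbb{X}}$ satisfied $Tz = \pm \|T\|\,z = \pm z$, then $f(z)\,y = \pm z$ forces $z$ to be a scalar multiple of $y$, so $z = \pm y$; substituting back yields $f(y) = \pm 1$, contradicting $|f(y)| < 1$.

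The main obstacle, and the only place where the polygonal (polytope) hypothesis is genuinely used, is the facet-geometry step: guaranteeing a vertex of $B_{\mathbb{X}}$ with two distinct extreme supporting functionals $f, g$, and ensuring that the facet $M_g$ is not entirely swallowed by $M_f \cup (-M_f)$. Once these combinatorial facts have been extracted from the polytope structure of $B_{\mathbb{X}}$, the rest of the argument is a routine verification for the rank-one operator $f(\cdot)\,y$.
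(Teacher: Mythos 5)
Your proposal is correct, and it reaches the conclusion by a genuinely different (and in some ways cleaner) route than the paper. The paper first treats the two-dimensional case explicitly: it picks three consecutive vertices $v_1,v_2,v_3$ with the edges $[v_1,v_2]$ and $[v_2,v_3]$ on the sphere, constructs by hand a vector $h$ with $v_2\perp_B h$ and $\|v_2+\lambda h\|>1$ for $\lambda>0$, defines $T$ by $T(v_2)=\frac{v_2+v_3}{2}$, $T(h)=0$, and rules out eigenvectors through an explicit computation with $\|tv_2+(1-t)h\|$; for $\dim\mathbb{X}>2$ it then passes to a two-dimensional section through an extreme point and extends $T$ by annihilating a complementary hyperspace. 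You instead work in arbitrary dimension at once, in dual language: you take a vertex $x$ with two distinct facet-exposing functionals $f,g$, choose $y$ in the facet $M_g$ but outside $M_f\cup(-M_f)$ by a dimension count on proper subfaces, and set $T=f(\cdot)\,y$. The underlying mechanism is the same in both arguments --- a rank-one operator normed at a vertex whose image lies on an adjacent face (so that the face's exposing functional certifies $x\in V_T$ with $v(T)=\|T\|=1$) but is not normed by the vertex's own functional (so no eigenvector exists) --- but your dual-polytope formulation replaces the paper's ad hoc construction of $h$ and its case analysis by the clean condition $|f(y)|<1$, and it removes the need for the separate $2$-dimensional reduction. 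Both proofs tacitly require $\dim\mathbb{X}\ge 2$ (the statement is false in dimension one, where every rank-one operator is a scalar multiple of the identity); you state this explicitly, which is an improvement. The only steps you should make sure to justify fully in a written version are the standard polytope facts you invoke: that a vertex of an $n$-dimensional polytope lies on at least $n\ge 2$ facets, and that a facet cannot be covered by two of its proper subfaces.
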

\begin{proof}
	We first prove the theorem for  $2-$dimensional polygonal Banach spaces. There exist three vertices $v_1,v_2,v_3$ in $S_{\mathbb{X}}$ such that $\|(1-t)v_1+tv_2\|=\|(1-t)v_2+tv_3\|=1$ for all $t\in [0,1].$ It is easy to observe that there exists a non-zero vector $h$ in the line segment joining $v_3$ and $\frac{v_3-v_2}{\|v_3-v_2\|}$ such that $v_2\perp_B h$ and $\|v_2+\lambda h\|>1$ for all $\lambda >0.$  Define a linear operator $T$ on $\mathbb{X}$ by $T(v_2)=\frac{v_2+v_3}{2}$ and $T(h)=0.$ Then clearly, $\|T\|=1=v(T)$ and $v_2\in M_T\cap V_T.$  Clearly, $\frac{v_2+v_3}{2}=\frac{tv_2+(1-t)h}{\|tv_2+(1-t)h\|}$ for some $t\in (0,1).$ Now, let $x\in S_{\mathbb{X}}.$ Then $x=av_2+bh$ for some $a,b\in \mathbb{R}.$ If possible, suppose that $Tx=x.$ Then $aTv_2=av_2+bh\Rightarrow a\frac{v_2+v_3}{2}=av_2+bh\Rightarrow a\frac{tv_2+(1-t)h}{\|tv_2+(1-t)h\|}=av_2+bh.$ From this equality it follows that $\|tv_2+(1-t)h\|=t,$ which contradicts that $\|v_2+\lambda h\|>1$ for all $\lambda>0.$ Similarly, $Tx=-x$ gives that   $\|tv_2+(1-t)h\|=-t,$ which is clearly a contradiction. Thus, there exists no $x\in S_{\mathbb{X}}$ such that $Tx=\pm x.$ This completes the proof for  $2-$dimensional polygonal Banach space.\\
	Now, suppose that dimension of $\mathbb{X}=n(>2).$ Let $v$ be an extreme point of $\mathbb{X}.$ Let $Y$ be any $2-$dimensional subspace of $\mathbb{X}$ containing $v.$ Then $Y$ is a polygonal $2-$dimensional Banach space and $v$ is an extreme point of $Y.$ There exist two vertices $v_1,v_3\in S_{Y}$ such that $\|(1-t)v_1+tv\|=\|(1-t)v+tv_3\|=1$ for all $t\in [0,1].$ Now, as in $2-$dimensional case we choose $h\in S_{Y}$ such that $v\perp_B h$ and $\|v+\lambda h\|>1$ for all $\lambda >0.$ There exists a hyperspace $H$ in $\mathbb{X}$ such that $x\perp_B H$ and $h\in H.$ Define $T\in L(\mathbb{X})$ by $T(v)=\frac{v+v_3}{2}$ and $T(h)=0$ for all $h\in H.$ Then as previous, it can be shown that $M_T\cap V_T\neq \emptyset$  but there does not exist any $x\in S_{\mathbb{X}}$ such that $Tx=\pm x.$
\end{proof}

Our next result extends Corollary \ref{cor:rank1} to a large class of strictly convex Banach spaces.

\begin{theorem}
	Let $\mathbb{X}=\ell_p(\mathbb{R}^2),$ where $p$ in even. Let $T\in S_{L(\mathbb{X})}$ be such that rank$(T)=1$ and $M_T\cap V_T\neq \emptyset.$ Then there exists $(x,y)\in S_{\mathbb{X}}$ such that $T(x,y)=\pm (x,y).$
\end{theorem}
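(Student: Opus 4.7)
The plan is to exploit that $\ell_p(\mathbb{R}^2)$ for $p$ even is both strictly convex and smooth, so that supporting functionals on the unit sphere are unique and take a clean polynomial form. First I would write $T$ in its canonical rank-one form $T(u,v)=f(u,v)(a,b)$ and, using $\|T\|=\|f\|_q\|(a,b)\|_p=1$ with $q=p/(p-1)$, normalize so that $\|f\|_q=1=\|(a,b)\|_p$. The hypothesis $(x,y)\in M_T$ then gives $|f(x,y)|=1$, so $f$ attains its norm at $(x,y)$. Strict convexity of $\ell_p$ makes $(x,y)$ the unique norm-attaining point of $f$ up to sign, while smoothness pins down $f$ to be the unique support functional at $(x,y)$; since $p$ is even, this support functional is $(u,v)\mapsto x^{p-1}u+y^{p-1}v$. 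After absorbing a sign into $(a,b)$, I assume $f(u,v)=x^{p-1}u+y^{p-1}v$.

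Next I would use $(x,y)\in V_T$. Smoothness forces the unique norm-attaining dual vector at $(x,y)$ to be $x^*=f$, so a direct computation gives $x^*(T(x,y))=f(x,y)f(a,b)=f(a,b)$, whence $v(T)=|f(a,b)|$; note that $v(T)>0$, since otherwise a quick check forces $T=0$, contradicting $\|T\|=1$. The function
\[
G(u,v)=(x^{p-1}u+y^{p-1}v)(u^{p-1}a+v^{p-1}b)
\]
on the sphere $u^p+v^p=1$ attains its maximum modulus $v(T)$ at $(x,y)$, and since $G(x,y)=f(a,b)\neq 0$, the Lagrange multiplier condition $\nabla G(x,y)=\mu\,\nabla(u^p+v^p)(x,y)$ is a legitimate necessary condition. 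Writing $\beta:=f(a,b)$ and using $f(x,y)=1$, it simplifies to
\[
x^{p-1}\beta+(p-1)x^{p-2}a=\mu p\,x^{p-1},\qquad y^{p-1}\beta+(p-1)y^{p-2}b=\mu p\,y^{p-1}.
\]

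The main case is $xy\neq 0$: dividing the two equations by $x^{p-1}$ and $y^{p-1}$ respectively and subtracting yields $a/x=b/y$, so $(a,b)$ is a scalar multiple of $(x,y)$, and the $\ell_p$-normalization then forces $(a,b)=\pm(x,y)$. The main obstacle is the axis case, say $y=0$ and $x=\pm 1$, because for $p>2$ the $v$-component of the Lagrange equation degenerates to $0=0$ and says nothing about $b$. To handle this I would parameterize the sphere near $(1,0)$ by $u=(1-v^p)^{1/p}$ and expand
\[
G(v)=a+bv^{p-1}-av^p+O(v^{2p-1}).
\]
Since $p-1$ is odd, unless $b=0$ the odd-order term $bv^{p-1}$ would make $|G(v)|$ strictly exceed $|a|=|G(0)|$ on one side of $v=0$, contradicting maximality; hence $b=0$ and again $(a,b)=\pm(1,0)=\pm(x,y)$. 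The case $x=0$ is symmetric.

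In every case $(a,b)=\pm(x,y)$, and therefore $T(x,y)=f(x,y)(a,b)=\pm(x,y)$, exhibiting $(x,y)$ itself as the required unit eigenvector of $T$ with eigenvalue $\pm 1$.
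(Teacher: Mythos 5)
Your proof is correct, and its core coincides with the paper's: in the case $xy\neq 0$ both arguments reduce to the same Lagrange-multiplier system for the numerical-radius function $w\mapsto w^*(Tw)$ on the sphere $u^p+v^p=1$, and both extract $(a,b)=\pm(x,y)$ from it (your two displayed equations are, after using $x^p+y^p=1$, exactly the paper's equations $(1)$ and $(2)$). The differences are in the setup and in the axis case. You identify the kernel of $T$ and the functional $f$ via the canonical rank-one form together with smoothness of $\ell_p$ (uniqueness of the support functional), whereas the paper gets the same information from Birkhoff--James orthogonality preservation ($x\in M_T$ and $x\perp_B y$ imply $Tx\perp_B Ty$, forcing $T(y^{p-1},-x^{p-1})=0$); your route is more self-contained and makes the normalization $f=J(x,y)$ transparent. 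For the axis case the paper exhibits an explicit competitor $w=(\tfrac{p\alpha}{k},\tfrac{(p-1)\beta}{k})$ and checks $|w^*(Tw)|>|\alpha|$ directly, while you obtain the same contradiction from the local expansion $G(v)=a+bv^{p-1}-av^p+O(v^{2p-1})$ and the oddness of $p-1$; the two are equivalent in content, but your version also isolates and justifies the hypothesis $v(T)>0$ (equivalently $a\neq 0$), a degenerate subcase the paper passes over silently. Nothing in your argument is missing; the only cosmetic caveat is that the phrase ``unless $b=0$ the odd-order term would make $|G(v)|$ exceed $|a|$ on one side of $v=0$'' implicitly uses $a\neq 0$ to replace $|G(v)|$ by $\operatorname{sign}(a)\,G(v)$ near $v=0$, which you have indeed secured beforehand.
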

\begin{proof}
	First suppose that $e=(1,0)\in M_T\cap V_T.$ Since $(1,0)\perp_B(0,1),$ $T(1,0)\perp_BT(0,1).$ Now, rank$(T)=1$ gives that $T(0,1)=(0,0).$ Let $Te=(\alpha,\beta).$ Then $\|Te\|=1\Rightarrow \alpha^p+\beta^p=1.$ Let $e^*\in S_{\mathbb{X}^*}$ be such that $e^*(e)=1.$ Then $|e^*(Te)|=|e^*(\alpha,\beta)|=|\alpha|.$ If $|\alpha|=1,$ then $T(1,0)=\pm(1,0)$ and we are done. If possible, suppose that $|\alpha|<1.$ Then $\beta\neq0.$ Choose $w=(\frac{p\alpha}{k},\frac{(p-1)\beta}{k}),$ where $k=\{p^p\alpha^p+(p-1)^p\beta^p\}^{\frac{1}{p}}.$ Then $w\in S_{\mathbb{X}}.$ Now, $Tw=\frac{p\alpha}{k}T(1,0)=\frac{p\alpha}{k}(\alpha,\beta).$ By an easy calculation it can be observed that if $w^*\in S_{\mathbb{X}^*}$ be such that $w^*(w)=1$ then $|w^*(Tw)|=|\alpha||\frac{p^p\alpha^p+\frac{p}{p-1}(p-1)^p\beta^p}{k^p}|>|\alpha|=|e^*(Te)|,$ since $\frac{p}{p-1}>1$ and $\beta\neq0.$ This contradicts that $e\in V_T.$ Thus, $|\alpha|=1$ and $T(1,0)=\pm(1,0).$ Similarly, if $(0,1)\in M_T\cap V_T,$ then it can be shown that $T(0,1)=\pm (0,1).$\\
	Now, suppose that $e=(x,y)\in M_T\cap V_T,$ where $x\neq0,y\neq0.$ Then $(x,y)\perp_B(y^{p-1},-x^{p-1}).$ Hence $T(x,y)\perp_BT(y^{p-1},-x^{p-1}).$ Since rank$(T)=1,$ we have, $T(y^{p-1},-x^{p-1})=0.$ Let $Te=T(x,y)=(\alpha,\beta).$ Then $\alpha^p+\beta^p=1.$ Let $w=(\gamma,\delta)\in S_{\mathbb{X}}.$ Then $\gamma^p+\delta^p=1.$ Now, $w=(\gamma,\delta)=a(x,y)+b(y^{p-1},-x^{p-1}),$ implies that $a=\gamma x^{p-1}+\delta y^{p-1}.$ Thus, $Tw=a(\alpha,\beta).$ Now, $w^*\in S_{\mathbb{X}^*}$ such that $w^*(w)=1$ implies that $|w^*(Tw)|=|\alpha \gamma^px^{p-1}+\alpha\delta y^{p-1}\gamma^{p-1}+\beta\gamma x^{p-1}\delta^{p-1}+\beta y^{p-1}\delta^p|.$ Now, we maximize $|w^*(Tw)|$ subject to the condition $\gamma^p+\delta^p=1.$ Suppose $F(\gamma,\delta)=\alpha \gamma^px^{p-1}+\alpha\delta y^{p-1}\gamma^{p-1}+\beta\gamma x^{p-1}\delta^{p-1}+\beta y^{p-1}\delta^p-\lambda(\gamma^p+\delta^p-1),$ where $\lambda$ is Lagrange multiplier. Now, for extremum of $w^*(Tw),$ we have $F_\gamma(\gamma,\delta)=0$ and $F_\delta(\gamma,\delta)=0.$ Since $(x,y)\in V_T,$ we get $F_\gamma(x,y)=0$ and $F_\delta(x,y)=0.$ Now, $F_\gamma(x,y)=0\Rightarrow$
	\begin{eqnarray}
	\lambda px^p=\alpha p x^{2p-1}+\alpha(p-1)y^px^{p-1}+\beta x^py^{p-1}.
	\end{eqnarray}
	$F_\delta(x,y)=0\Rightarrow$
	\begin{eqnarray}
	\lambda py^p=\alpha  x^{p-1}y^p+\beta(p-1)x^py^{p-1}+\beta py^{2p-1}.
	\end{eqnarray}
	Solving equation $(1)$ and $(2),$ we get $(\alpha,\beta)=\pm(x,y).$ Thus, $T(x,y)=\pm(x,y).$ This completes the proof of the theorem. 
\end{proof}

We next consider a bounded linear operator $ T $ on a strictly convex normed space that satisfies the Daugavet equation. In this context, we have the following result: 

\begin{theorem}\label{theorem:scspace}
Let $\mathbb{X}$ be a strictly convex normed space. Suppose $T\in L(\mathbb{X})$ is such that $(i)$ $I+T$ attains its norm and $(ii)$ $T$ satisfies the Daugavet equation. Then the following holds:\\ $(a)$ $T$ attains its norm,\\ $(b)$ $\|T\|$ is an eigenvalue of $T$,\\ $(c)$ $\|T\|=v(T)$,\\  $(d)$ $M_T \cap V_T\neq \emptyset.$
\end{theorem}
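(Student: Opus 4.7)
The plan is to exhibit a single unit vector $x$ that simultaneously witnesses all four conclusions, extracted from the hypothesis that $I+T$ attains its norm.

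First I would use $(i)$ to fix $x \in S_{\mathbb{X}}$ with $\|x+Tx\|=\|I+T\|$. Combining the Daugavet equation $(ii)$ with the triangle inequality gives
\[
1+\|T\|=\|I+T\|=\|x+Tx\|\le\|x\|+\|Tx\|\le 1+\|T\|,
\]
so equality holds throughout. Reading off the right inequality yields $\|Tx\|=\|T\|$, which establishes $(a)$, while the left inequality becomes the triangle equality $\|x+Tx\|=\|x\|+\|Tx\|$.

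Next I would invoke the standard consequence of strict convexity: if $u,v$ are nonzero and $\|u+v\|=\|u\|+\|v\|$, then $u/\|u\|=v/\|v\|$. This follows by writing $\alpha=\|u\|/(\|u\|+\|v\|)$ and observing that $\alpha(u/\|u\|)+(1-\alpha)(v/\|v\|)=(u+v)/(\|u\|+\|v\|)$ has norm $1$, so strict convexity of $B_{\mathbb{X}}$ forces the two unit vectors on the left to coincide. Applying this with $u=x$ and $v=Tx$ (the trivial case $T=0$ can be dispatched separately) produces $Tx=\|T\|x$, which is precisely $(b)$.

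For $(c)$ and $(d)$, I would pick $x^*\in S_{\mathbb{X}^*}$ with $x^*(x)=1$ by Hahn--Banach. Then
\[
|x^*(Tx)|=\|T\|\,|x^*(x)|=\|T\|,
\]
so $v(T)\ge\|T\|$; since $v(T)\le\|T\|$ always holds, this gives $v(T)=\|T\|$, proving $(c)$. The same pair $(x,x^*)$ simultaneously certifies $x\in M_T$ (because $\|Tx\|=\|T\|$) and $x\in V_T$ (because $|x^*(Tx)|=v(T)$), so $x\in M_T\cap V_T$, yielding $(d)$.

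The only genuinely non-routine ingredient is the strict-convexity implication that upgrades the triangle equality to the eigenrelation $Tx=\|T\|x$; once that is in place, the remaining conclusions are formal consequences of norm attainment together with Hahn--Banach.
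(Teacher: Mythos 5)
Your proposal is correct and follows essentially the same route as the paper: the same chain of inequalities from the Daugavet equation forces $\|Tx\|=\|T\|$ and the triangle equality, strict convexity upgrades this to $Tx=\|T\|x$, and Hahn--Banach then yields $(c)$ and $(d)$. The only difference is cosmetic — you spell out the strict-convexity lemma and the trivial case $T=0$, which the paper leaves implicit.
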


\begin{proof}
From the given condition (i), we have  there exists $x\in S_{\mathbb{X}}$ such that $\|(I+T)x\|=\|I+T\|.$ Then, from the given condition (ii) we have $1+\|T\|=\|I+T\|=\|(I+T)x\|=\|x+Tx\|\leq \|x\|+\|Tx\|=1+\|Tx\|\leq 1+\|T\|.$ From this inequality it follows that $\|Tx\|=\|T\|$ and $\|x+Tx\|=\|x\|+\|Tx\|$. Therefore, from the equality condition of strictly convex space we have $Tx=\lambda x$, for some $\lambda \geq 0$. So, $\|Tx\|=\lambda=\|T\|$. Therefore, $Tx=\|T\|x.$ Now, from Hahn-Banach Theorem we have, there exists $x^{*}\in S_{\mathbb{X}^{*}}$ with $x^{*}(x)=1$ such that $|x^{*}(Tx)|=\|T\|$. This imply that $v(T)=\|T\|.$ Also, we see that $x\in M_T \cap V_T $. This completes the proof of the theorem.
\end{proof}

\begin{remark}
	Theorem \ref{theorem:scspace} may not be true in a normed space which is not strictly convex. For example, if we consider $T\in L(\ell_{\infty}(\mathbb{R}^2))$ defined by $T(x,y)=(\frac{x+y}{2},0),$ then it is easy to observe that $T$ satisfies the Daugavet equation but $\|T\|$ is not an eigenvalue of $T.$  
\end{remark}

We next present an easy sufficient condition for the intersection of the norm attainment set and the numerical radius attainment set of a bounded linear operator to be non-empty.

\begin{theorem}\label{theorem:nonempty}
Let $\mathbb{X}$ be a normed space. Let $T\in S_{L(\mathbb{X})}$ with $V_T \neq \emptyset$. If $\|T\|=v(T),$ then  $M_T \cap V_T\neq \emptyset.$
\end{theorem}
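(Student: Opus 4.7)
The plan is to show that \emph{any} vector $x$ in $V_T$ automatically lies in $M_T$, so the non-emptiness of $V_T$ combined with the equality $\|T\|=v(T)$ instantly yields the conclusion. The whole argument should be a short chain of inequalities, with no need to construct a new witness.

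First I would invoke the hypothesis $V_T \neq \emptyset$ to pick some $x \in V_T \subseteq S_{\mathbb{X}}$. By the definition of $V_T$, there exists $x^{*}\in S_{\mathbb{X}^{*}}$ with $x^{*}(x)=1$ and $|x^{*}(Tx)|=v(T)$. Next I would chain the standard norm estimates:
\begin{equation*}
v(T)=|x^{*}(Tx)|\leq \|x^{*}\|\,\|Tx\| = \|Tx\| \leq \|T\|.
\end{equation*}
Since the hypothesis $\|T\|=v(T)$ makes the two ends of this chain equal, every inequality must be an equality. In particular $\|Tx\|=\|T\|$, so $x \in M_T$.

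Combining this with $x \in V_T$ yields $x \in M_T \cap V_T$, proving $M_T \cap V_T \neq \emptyset$. I do not anticipate any obstacle here: the argument uses only the definitions of $M_T$, $V_T$, $v(T)$, and the basic inequality $|x^{*}(Tx)|\leq \|x^{*}\|\,\|Tx\|$, and the normalization $T \in S_{L(\mathbb{X})}$ is not even needed beyond ensuring that the quantities $\|T\|$ and $v(T)$ being compared are finite.
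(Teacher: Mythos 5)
Your proposal is correct and is essentially identical to the paper's own proof: both pick $x\in V_T$, use the chain $v(T)=|x^{*}(Tx)|\leq\|Tx\|\leq\|T\|$ together with $\|T\|=v(T)$ to conclude $x\in M_T$, hence $V_T\subseteq M_T$ and the intersection is non-empty.
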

\begin{proof}
Let $\|T\|=v(T)$. Since $V_T \neq \emptyset$, so let $x\in V_T$. Then there exists $x^{*}\in S_{\mathbb{X}^{*}}$ such that $x^{*}(x)=1$ and $|x^{*}(Tx)|=v(T)$. Now, $\|T\|=v(T)=|x^{*}(Tx)|\leq \|Tx\|\leq \|T\|.$ This shows that  $\|Tx\|=\|T\|$. So, $x\in M_T.$  Therefore, $V_T \subseteq M_T$. Since $V_T \neq \emptyset$, so $M_T \cap V_T\neq \emptyset$.
\end{proof}

\begin{remark}
We note that, the converse part of the Theorem \ref{theorem:nonempty} may not true. For example, if we consider $T\in L(\ell_2(\mathbb{R}^4))$ defined by \[T(x,y,z,w)=\Big(\frac{x-y-z}{\sqrt{3}},\frac{x+y}{\sqrt{3}},\frac{x+z}{\sqrt{3}},0\Big),\]
then it is easy to see that $(1,0,0,0)\in M_T\cap V_T,~\|T\|=1$ and $v(T)=\frac{1}{\sqrt{3}},$ i.e., $M_T\cap V_T\neq \emptyset$ but $\|T\|\neq v(T).$
\end{remark}

In the following theorem we completely characterize when the norm of a bounded linear operator is equal to its numerical radius, in the context of strictly convex normed spaces.

\begin{theorem}\label{theorem:com nonempty}
Let $\mathbb{X}$ be a strictly convex normed space. Let $T\in S_{L(\mathbb{X})}$ with $V_T \neq \emptyset$. Then $\|T\|=v(T)$ if and only if there exists $x \in S_{\mathbb{X}}$ such that $Tx=\lambda x$ for some scalar $\lambda$ with $|\lambda|=1$.
\end{theorem}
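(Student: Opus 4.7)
The plan is to handle the two implications separately, with the forward direction being essentially immediate from Hahn--Banach and the reverse direction requiring a midpoint argument that exploits strict convexity.

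For the easy direction, assume there is $x \in S_{\mathbb{X}}$ with $Tx = \lambda x$ and $|\lambda|=1$. By the Hahn--Banach theorem, pick $x^{*}\in S_{\mathbb{X}^{*}}$ with $x^{*}(x)=1$. Then $|x^{*}(Tx)| = |\lambda|\,|x^{*}(x)| = 1$, which together with $\|T\|=1$ and the general inequality $v(T)\leq\|T\|$ forces $v(T)=\|T\|$.

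For the nontrivial direction, assume $\|T\|=v(T)=1$ and pick $x \in V_T$; there exists $x^{*}\in S_{\mathbb{X}^{*}}$ with $x^{*}(x)=1$ and $|x^{*}(Tx)|=v(T)=1$. From
\[
1 = |x^{*}(Tx)| \leq \|x^{*}\|\,\|Tx\| \leq \|T\| = 1
\]
one reads off $\|Tx\|=1$. Writing $x^{*}(Tx)=\lambda$ with $|\lambda|=1$, set $y = \overline{\lambda}\,Tx \in S_{\mathbb{X}}$, so that $x^{*}(y)=1=x^{*}(x)$. Consider the midpoint $\tfrac{x+y}{2}$: on the one hand $x^{*}\!\left(\tfrac{x+y}{2}\right)=1$ gives $\left\|\tfrac{x+y}{2}\right\|\geq 1$, and on the other hand the triangle inequality gives $\left\|\tfrac{x+y}{2}\right\|\leq 1$. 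Hence $\tfrac{x+y}{2}\in S_{\mathbb{X}}$ with $x,y\in S_{\mathbb{X}}$, and strict convexity of $\mathbb{X}$ (which says every unit vector is an extreme point of $B_{\mathbb{X}}$) forces $x=y$, i.e., $Tx = \lambda x$ with $|\lambda|=1$.

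The only point that needs mild care is keeping the argument valid in both the real and complex cases; treating $\lambda$ as an arbitrary unit scalar and using $\overline{\lambda}$ in the definition of $y$ covers both settings uniformly, since in the real case $\overline{\lambda}=\lambda\in\{\pm 1\}$. I do not anticipate any serious obstacle: the midpoint-plus-strict-convexity trick is standard, and the hypothesis $V_T\neq\emptyset$ is precisely what is needed to produce the pair $(x,x^{*})$ that launches the argument.
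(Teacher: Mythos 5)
Your proof is correct and follows essentially the same route as the paper: the forward implication via Hahn--Banach, and the reverse implication by observing that $x$ and the unit vector $\overline{\lambda}\,Tx$ both lie in $\{v : x^{*}(v)=1\}\cap B_{\mathbb{X}}$, so strict convexity collapses them to a single point. The paper phrases this as a contradiction using a general convex combination $tx+e^{-i\theta}(1-t)Tx$ rather than your midpoint, but the underlying idea is identical, and your direct version even yields $\lambda=x^{*}(Tx)$ without the paper's extra step of deducing $|\lambda|=1$ from $x\in M_{T}$.
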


\begin{proof}
First we prove the sufficient part of the theorem. Suppose there exists $x \in S_{\mathbb{X}}$ such that $Tx=\lambda x$ for some scalar $\lambda$ with $|\lambda|=1$. Then $\|Tx\|=|\lambda|=1=\|T\|.$ Also, by Hahn-Banach theorem we have, there exists $x^{*}\in S_{\mathbb{X}^{*}}$ with $x^{*}(x)=1$ such that $|x^{*}(Tx)|=|\lambda|=1$. So, $v(T)=1$. Therefore, $\|T\|=v(T).$ This completes the proof of the sufficient part of the theorem.\\
Now we prove the necessary part of the theorem. Let $\|T\|=v(T)$. Suppose $x\in V_T$. So, there exists $x^{*}\in S_{\mathbb{X}^{*}}$ such that $x^{*}(x)=1$ and $|x^{*}(Tx)|=v(T)=1$. This implies that $x^{*}(Tx)=e^{i\theta}$, for some $\theta \in [0,2\pi)$.  Now, $\|T\|=v(T)=|x^{*}(Tx)|\leq \|Tx\|\leq \|T\|.$ This shows that  $\|Tx\|=\|T\|,$ i.e., $x\in M_T.$ If possible, let $\{x, Tx\}$ be linearly independent. Also, let $k=\|tx+e^{-i\theta}(1-t)Tx\|$. Since $\mathbb{X}$ is strictly convex space, so $k<1.$ Now, $x^{*}(\frac{tx+e^{-i\theta}(1-t)Tx}{k})=\frac{1}{k}>1$ as  $k<1$. This is a contradiction because  $\frac{tx+e^{-i\theta}(1-t)Tx}{k} \in S_{\mathbb{X}}.$ So, $\{x, Tx\}$ is linearly dependent. Therefore, there exists a scalar $\lambda$ such that $Tx=\lambda x.$ Since $x\in M_T$ and $\|T\|=1$, so $|\lambda|=1.$ This completes the proof of the necessary part of the theorem. 
\end{proof}

\begin{remark} 
If $\mathbb{X}$ is not strictly convex then the necessary part of the Theorem \ref{theorem:com nonempty} may not be true. This is illustrated with the following examples.\\

\noindent \textbf{Example 1.}
Let $T: \ell_1(\mathbb{R}^n)\rightarrow \ell_1(\mathbb{R}^n)$ be a bounded linear operator defined as 
\begin{eqnarray*}
	T(x_1,x_2,\ldots,x_n)&=&\Big(\frac{x_1}{n},\frac{x_1}{n},\ldots,\frac{x_1}{n}\Big).
\end{eqnarray*}
Then, it is easy to check that $\|T\|=v(T)=1$ and $(1,0,\ldots,0)\in M_T \cap V_T$. But there exists no $x\in S_{\ell_1(\mathbb{R}^n)}$ such that $Tx=\pm x.$\\

\noindent \textbf{Example 2.}
Let $T: \ell_\infty(\mathbb{R}^n)\rightarrow \ell_\infty(\mathbb{R}^n)$ be a bounded linear operator defined as 
\begin{eqnarray*}
		T(x_1,x_2,\ldots,x_n)&=&\Big(\frac{x_1+x_2+\ldots+x_n}{n},0,\ldots,0\Big).
\end{eqnarray*}
Then, it is easy to check that $\|T\|=v(T)=1$ and $(1,1,\ldots,1)\in M_T \cap V_T$. But there exists no $x\in S_{\ell_\infty(\mathbb{R}^n)}$ such that $Tx=\pm x.$
\end{remark}

In the next three theorems, we discuss the intersection properties of the norm attainment set (minimum norm attainment set) and the Crawford number attainment set of a bounded linear operator on a real Hilbert space.

\begin{theorem}
	Let $\mathbb{H}$ be a real Hilbert space and $T\in L(\mathbb{H}).$ Then $M_T\cap c_T\neq \emptyset$ if and only if there exists $x\in S_{\mathbb{H}}$ such that for any $z\in x^{\perp}\cap S_{\mathbb{H}},$ the following conditions hold:\\
	(i)  $\langle Tx,Tz\rangle=0,$ \\
	(ii) $\|Tz\|\leq \|Tx\|.$  \\
  (iii) either $\langle Tx,x\rangle =0$ or if $\langle Tx,x\rangle\neq 0,$ then $x$ is an eigenvector of $Re(T)$ and $|\langle Tx,x\rangle| \leq |\langle Tz,z\rangle|.$ 
\end{theorem}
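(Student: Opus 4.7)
The plan is to deduce this result as a direct combination of two characterizations already available in the paper and in the literature, without any fresh analytic work. Specifically, for a bounded linear operator on a real Hilbert space $\mathbb{H}$, the norm attainment set admits the following description (\cite[Th.~2.1]{S}): a unit vector $x$ belongs to $M_T$ if and only if $\langle Tx,Tz\rangle=0$ and $\|Tz\|\leq \|Tx\|$ for every $z\in x^{\perp}\cap S_{\mathbb{H}}$. On the other hand, Theorem \ref{Th-ct} of the present paper gives a complete description of the Crawford number attainment set $c_T$: a unit vector $x$ lies in $c_T$ if and only if either $\langle Tx,x\rangle=0$, or $\langle Tx,x\rangle\neq 0$ in which case $x$ is an eigenvector of $\mathrm{Re}(T)$ and $|\langle Tx,x\rangle|\leq |\langle Tz,z\rangle|$ for every $z\in x^{\perp}\cap S_{\mathbb{H}}$.

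With these two tools in hand, the argument is essentially bookkeeping. First I would prove the sufficiency: assume some $x\in S_{\mathbb{H}}$ satisfies (i), (ii), (iii). The conditions (i) and (ii) are exactly the hypotheses of \cite[Th.~2.1]{S}, so $x\in M_T$. The condition (iii) is exactly the hypothesis of Theorem \ref{Th-ct}, so $x\in c_T$. Therefore $x\in M_T\cap c_T$, and in particular $M_T\cap c_T\neq\emptyset$.

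For the necessity, I would pick any $x\in M_T\cap c_T$. Since $x\in M_T$, \cite[Th.~2.1]{S} delivers conditions (i) and (ii) for every $z\in x^{\perp}\cap S_{\mathbb{H}}$. Since $x\in c_T$, Theorem \ref{Th-ct} delivers condition (iii). Combining, the same $x$ witnesses all three conditions.

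There is essentially no obstacle here, since both underlying characterizations have already been established; the only care needed is to verify that the two characterizations use the same orthogonality set $x^{\perp}\cap S_{\mathbb{H}}$ and that their conjunctive combination is exactly the stated list (i)--(iii), without redundant hypotheses. Given this, the proof reduces to two one-line invocations in each direction.
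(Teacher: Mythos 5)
Your proposal is correct and is exactly the paper's argument: the paper's proof is the one-line observation that the result follows from Theorem \ref{Th-ct} together with \cite[Th.~2.1]{S}, which is precisely the conjunction of the two characterizations you invoke. Your write-up simply makes that bookkeeping explicit.
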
	
\begin{proof}
	The result follows from Theorem \ref{Th-ct} and \cite[Th. 2.1]{S}. 
\end{proof}

\begin{theorem}
	Let $\mathbb{H}$ be a real Hilbert space and $T\in L(\mathbb{H})$ be such that $c_T\neq \emptyset.$ Then $M_T\cap c_T\neq\emptyset$ if and only if either of the following conditions holds:\\
	(i) $c(T)=v(T).$\\
	(ii) there exists $u,v\in S_{\mathbb{H}}$ such that $u\perp v$ and $Tu=\pm \|T\|v.$ \\
	(iii) there exists a two dimensional subspace $Y$ of $(ker(T))^{\perp}$ such that $T:Y\to Y$ is a scalar multiple of isometry, $\|T\|=\|T|_{Y}\|$ and $c(T)=c(T|_Y).$	
\end{theorem}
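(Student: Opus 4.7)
My approach is to mirror the structure of Theorem \ref{th-mtvt}, swapping in Theorem \ref{Th-ct} at the points where numerical-radius reasoning is replaced by Crawford-number reasoning. The three clauses (i)--(iii) are the direct Crawford-side analogues of the dichotomy in Theorem \ref{th-mtvt}, so the main work is to track how the eigenvector condition from Theorem \ref{Th-ct} propagates through the earlier computation.

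For the necessary direction I would fix $x \in M_T \cap c_T$ and invoke Theorem \ref{Th-ct}. If $\langle Tx, x\rangle = 0$, then $c(T) = 0$ and $Tx \perp x$, so setting $u = x$ and $v = Tx/\|T\|$ immediately delivers (ii). Otherwise $x$ is an eigenvector of $\mathrm{Re}(T)$, and I split on the linear dependence of $\{x, Tx\}$: if $Tx = \lambda x$, then $|\lambda|$ equals both $\|T\|$ (from $x \in M_T$) and $c(T)$ (from $x \in c_T$), forcing $c(T) = \|T\|$ and hence $c(T) = v(T)$, which is (i). If $\{x, Tx\}$ is linearly independent, I would set $Y = \mathrm{span}\{x, Tx\}$ and replay the key computation from Theorem \ref{th-mtvt}: pick $y \in m_{T|_Y}$, use \cite[Th.~3.1, Th.~2.4]{SPM} to obtain $x \perp y$ and $Tx \perp Ty$, expand $Tx = \alpha x + \beta y$ and $Ty = \mu(\beta x - \alpha y) + \gamma z$ with $z \in Y^\perp$, then use $\langle \mathrm{Re}(T) x, y\rangle = 0$ to force $\mu = -1$, and squeeze $\|T\|^2 + \gamma^2 = m(T|_Y)^2 \le \|T|_Y\|^2 \le \|T\|^2$ to conclude $\gamma = 0$ and $m(T|_Y) = \|T|_Y\| = \|T\|$. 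The inclusion $Y \subseteq (\ker T)^\perp$ follows verbatim from the argument in Theorem \ref{th-mtvt}, and $c(T|_Y) = c(T)$ is automatic because $x \in S_Y$ attains $c(T)$ while $S_Y \subseteq S_\mathbb{H}$ prevents $c(T|_Y)$ from dropping below $c(T)$. This gives (iii).

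For the sufficient direction I would argue directly in each case. Under (ii), the vector $u$ satisfies $\|Tu\| = \|T\|$ and $\langle Tu, u\rangle = \pm \|T\| \langle v, u\rangle = 0$; the latter forces $c(T) = 0$, so $u \in M_T \cap c_T$. Under (iii), $T|_Y$ being a scalar multiple of an isometry with $\|T|_Y\| = \|T\|$ places $S_Y$ inside $M_T$, while compactness of $S_Y$ supplies some $u \in c_{T|_Y}$, which via $c(T|_Y) = c(T)$ realizes $M_T \cap c_T$. Under (i), any $x \in c_T$ satisfies $|\langle Tx, x\rangle| = c(T) = v(T) \le \|Tx\| \le \|T\|$, so $x \in V_T$; one then invokes Theorem \ref{theorem:nonempty} together with the structural rigidity imposed by $c(T) = v(T)$ (which makes $\mathrm{Re}(T)$ a scalar on $\mathbb{H}$ after normalizing signs) to locate a common vector in $M_T$ and $c_T$.

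The main obstacle I anticipate is the linearly-independent subcase of the necessary direction: here the eigenvector property of $x$ with respect to $\mathrm{Re}(T)$ (from Theorem \ref{Th-ct}) and the perpendicularity conclusions from \cite{SPM} must mesh precisely to extract $\mu = -1$, after which the chain of norm inequalities must pinch shut to force $\gamma = 0$. The remaining subcases, and most of the sufficient direction, are essentially bookkeeping once the structural consequences of (i)--(iii) are unpacked.
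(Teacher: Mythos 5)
Most of your proposal is sound and is essentially the route the paper intends: the paper offers no written proof of this theorem beyond the remark that it ``follows using similar arguments'' to Theorem \ref{th-mtvt}, and your necessity argument (the trichotomy $\langle Tx,x\rangle=0$, $\{x,Tx\}$ dependent, $\{x,Tx\}$ independent, with Theorem \ref{Th-ct} supplying the eigenvector property of $Re(T)$ that forces $\mu=-1$ and pinches $\gamma=0$) together with your sufficiency arguments for (ii) and (iii) carry that analogy through correctly.

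The genuine gap is the sufficiency of (i). Theorem \ref{theorem:nonempty} does not apply: it needs $\|T\|=v(T)$, whereas (i) only gives $c(T)=v(T)$, and a vector of $c_T$ then lies in $V_T$ but need not lie in $M_T$, since $v(T)$ may be strictly smaller than $\|T\|$. Your parenthetical observation is correct as far as it goes --- $c(T)=v(T)$ forces the numerical range of $Re(T)$ to be a singleton, hence $Re(T)=\lambda I$ and $c_T=S_{\mathbb{H}}$ --- but then $M_T\cap c_T=M_T$, and nothing in (i) guarantees that $T$ attains its norm. Indeed the implication fails in infinite dimensions: on $\ell_2\oplus\ell_2$ take $T(x,y)=(Ay,-Ax)$ with $A$ the diagonal operator with entries $1-\frac{1}{n}$; then $T^*=-T$, so $c(T)=v(T)=0$ and $c_T=S_{\mathbb{H}}$, yet $M_T=\emptyset$. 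So no argument can close this step as the clause is stated. What your necessity argument actually produces in the linearly dependent subcase is the stronger, witnessed condition that there exists $x\in S_{\mathbb{H}}$ with $Tx=\pm\|T\|x$ and $|\langle Tx,x\rangle|=c(T)$ (the true analogue of clause (i) of Theorem \ref{th-mtvt}); it is that condition --- or an added hypothesis such as $M_T\neq\emptyset$ or finite-dimensionality --- that would make the sufficiency of (i) go through.
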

\begin{proof}
	The proof follows using similar arguments as given in the proof of Theorem \ref{th-mtvt}.
\end{proof}

\begin{theorem}
	Let $\mathbb{H}$ be a real Hilbert space and $T\in L(\mathbb{H}).$ Then $m_T\cap c_T\neq \emptyset$ if and only if there exists $x\in S_{\mathbb{H}}$ such that for any $z\in x^{\perp}\cap S_{\mathbb{H}},$  the following conditions hold:\\
	(i)  $\langle Tx,Tz\rangle=0,$ \\
	(ii) $\|Tz\|\geq \|Tx\|.$ \\
	(iii) either $\langle Tx,x\rangle =0$ or if $\langle Tx,x\rangle\neq 0,$ then $x$ is an eigenvector of $Re(T)$ and $|\langle Tx,x\rangle| \leq |\langle Tz,z\rangle|.$
\end{theorem}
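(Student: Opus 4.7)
The plan is to observe that the three conditions listed are exactly the union of the defining conditions for membership in $m_T$ and the defining conditions for membership in $c_T$, so the theorem reduces to a straightforward conjunction of two previously established characterizations.

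First I would invoke the characterization of the minimum norm attainment set of a bounded linear operator on a Hilbert space given in \cite[Th. 2.4]{SPM}: a vector $x \in S_{\mathbb{H}}$ belongs to $m_T$ if and only if $\langle Tx, Tz \rangle = 0$ and $\|Tx\| \leq \|Tz\|$ for every $z \in x^{\perp} \cap S_{\mathbb{H}}$. This yields precisely conditions (i) and (ii) of the statement.

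Next I would apply Theorem \ref{Th-ct}, which asserts that $x \in c_T$ if and only if either $\langle Tx, x \rangle = 0$, or else $\langle Tx, x \rangle \neq 0$, $x$ is an eigenvector of $\mathrm{Re}(T)$, and $|\langle Tx, x \rangle| \leq |\langle Tz, z \rangle|$ for every $z \in x^{\perp} \cap S_{\mathbb{H}}$. This is exactly condition (iii).

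Since $m_T \cap c_T \neq \emptyset$ is equivalent to the existence of a vector $x \in S_{\mathbb{H}}$ that simultaneously lies in $m_T$ and in $c_T$, combining the two characterizations establishes the theorem in both directions. There is no serious obstacle here; the entire content of the theorem is the assembly of the two prior results, and no fresh calculation is required.
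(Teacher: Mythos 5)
Your proposal is correct and follows exactly the route the paper takes: the paper's own proof is the one-line observation that the result follows from Theorem \ref{Th-ct} together with \cite[Th. 2.4]{SPM}, which is precisely your decomposition of conditions (i)--(ii) as the characterization of $m_T$ and condition (iii) as the characterization of $c_T$.
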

	
	\begin{proof}
		The result follows from Theorem \ref{Th-ct} and \cite[Th. 2.4]{SPM}. 
	\end{proof}

In case of a real $ 2- $dimensional Hilbert space, we have the following characterization:

\begin{theorem}
	Let $\mathbb{H}$ be a $2-$dimensional real Hilbert space and $T\in L(\mathbb{H}).$ Then $m_T\cap c_T\neq\emptyset$ if and only if either of the following conditions holds:\\
	(i) there exists $x\in S_{\mathbb{H}}$ such that $Tx=\pm m(T)x=\pm c(T)x.$\\
	(ii) there exists $u,v\in S_{\mathbb{H}}$ such that $u\perp v$ and $Tu=\pm m(T)v.$\\
	(iii) $T$ is a scalar multiple of isometry.	
\end{theorem}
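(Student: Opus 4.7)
My plan is to use the special features of dimension two throughout: $x^{\perp} \cap S_{\mathbb{H}}$ consists of exactly two antipodal points $\{\pm z\}$, and once $x$ is known to be an eigenvector of $Re(T)$, the other unit vector $z$ is automatically an eigenvector of $Re(T)$ as well (since $Re(T)$ is self-adjoint and hence diagonalizable in an orthonormal basis).

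For the necessity direction, I pick $x \in m_T \cap c_T$ and fix a unit vector $z \perp x$. From the preceding characterization of $m_T \cap c_T$ (the theorem just above this one), I know $\langle Tx, Tz\rangle = 0$ and $\|Tz\| \geq \|Tx\| = m(T)$. I then split on whether $\langle Tx, x\rangle = 0$. If $\langle Tx, x\rangle = 0$, then $Tx \perp x$; in two dimensions this forces $Tx \in \operatorname{span}\{z\}$, so $Tx = \pm \|Tx\| z = \pm m(T) z$, yielding condition $(ii)$. If $\langle Tx, x\rangle \neq 0$, Theorem \ref{Th-ct} says $x$ is an eigenvector of $Re(T)$, hence so is $z$, so $\langle Re(T) x, z\rangle = 0$, i.e.\ $\langle Tx, z\rangle + \langle Tz, x\rangle = 0$.

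Writing $Tx = ax + bz$ and $Tz = cx + dz$ with $a = \langle Tx,x\rangle$, $b = \langle Tx,z\rangle$, the relation above gives $c = -b$. The orthogonality $\langle Tx, Tz\rangle = 0$ then becomes $-ab + bd = b(d-a) = 0$. In the subcase $b = 0$, we get $Tx = ax$ with $|a| = \|Tx\| = m(T)$ and also $|a| = |\langle Tx, x\rangle| = c(T)$, so $Tx = \pm m(T) x = \pm c(T) x$, giving $(i)$. In the subcase $d = a$, the matrix of $T$ in the basis $\{x,z\}$ is $\bigl(\begin{smallmatrix} a & -b \\ b & a\end{smallmatrix}\bigr)$, so a direct computation shows $\|Tu\|^2 = a^2+b^2$ for every $u \in S_{\mathbb{H}}$; polarization then gives $T^*T = (a^2+b^2) I$, so $T$ is a scalar multiple of an isometry, giving $(iii)$.

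For the sufficiency direction, each of $(i)$, $(ii)$, $(iii)$ is a short direct verification: in $(i)$, $x \in m_T$ is immediate and $|\langle Tx, x\rangle| = c(T)$ forces $x \in c_T$; in $(ii)$, $u \in m_T$ is immediate and $\langle Tu, u\rangle = \pm m(T) \langle v, u\rangle = 0$ forces $c(T) = 0$, whence $u \in c_T$; in $(iii)$, $m_T = S_{\mathbb{H}}$ while $c_T \neq \emptyset$ by compactness of $S_{\mathbb{H}}$, so their intersection equals $c_T$ and is nonempty. The only step that requires genuine thought is the case split in the necessity argument; the main delicate point is that two dimensions is essential for concluding in the case $b=0$ that $x$ itself (rather than some vector in a larger eigenspace of $Re(T)$) is an eigenvector of $T$, and for concluding in the case $d=a$ that the constancy of $\|Tu\|$ on $S_{\mathbb{H}}$ actually characterizes $T$ globally.
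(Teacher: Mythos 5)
Your proof is correct and follows essentially the same route as the paper: both use the orthogonality characterizations of $m_T$ from \cite{SPM} together with Theorem \ref{Th-ct}, split into the cases $Tx\perp x$, $Tx\parallel x$, and neither, and verify sufficiency directly. Your version of the third case (writing $T$ as the matrix $a I + b J$ with $J$ the rotation by $\pi/2$) is just a more explicit form of the paper's computation with the parameter $\mu$.
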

\begin{proof}
	The sufficient part follows trivially. We only prove the necessary part of the theorem. Let $m_T\cap c_T\neq\emptyset.$ Let $x\in m_T\cap c_T$ and $y\in M_T.$ Then  by \cite[Th. 3.1, Th. 2.4]{SPM}, we have, $x\perp y$ and $Tx\perp Ty.$  Suppose that $Tx=\alpha x+\beta y,$ for some $\alpha,\beta\in \mathbb{R}.$ If $\beta=0,$ then $x\in m_T\Rightarrow m(T)=\|Tx\|=|\alpha|$ and $x\in c_T\Rightarrow c(T)=|\langle Tx,x\rangle|=|\alpha|.$ Thus, in this case $(i)$ holds. If $\alpha=0,$ then clearly $(ii)$ holds. Now, let $\alpha \neq 0,\beta \neq 0.$ Since, $Tx\perp Ty,$  $Ty=\mu(\beta x-\alpha y),$ for some $\mu\in \mathbb{R}.$ Thus, $\|T\|^2=\|Ty\|^2=\mu^2(\alpha^2+\beta^2)=\mu^2m(T)^2.$ So $\mu^2=\frac{\|T\|^2}{m(T)^2}\geq 1.$ Since $x\in c_T$ and $\langle Tx,x\rangle=\alpha\neq 0,$ by Theorem \ref{Th-ct}, we get, $x$ is an eigenvector of $Re(T).$ Hence, $\langle Re(T)x,y\rangle=0\Rightarrow \langle Tx,y\rangle+\langle Ty,x\rangle=0\Rightarrow \beta+\mu\beta=0\Rightarrow1+\mu=0,$ since $\beta\neq 0.$ Therefore, $|\mu|=1\Rightarrow \|T\|=m(T).$ This gives that $T$ is a scalar multiple of isometry. Thus, in this case $(iii)$ holds. This completes the proof of the necessary part of the theorem.\\
	Now we prove the sufficient part of the theorem. $(i)$ clearly implies that $x\in m_T\cap c_T.$ If $(ii)$ holds, then $u\in m_T$ and $|\langle Tu,u\rangle|=0\Rightarrow c(T)=0$ and $u\in c_T.$ So $m_T\cap c_T\neq \emptyset.$ If $(iii)$ holds, then $m_T=S_{\mathbb{H}}.$ Since $c_T\neq \emptyset, m_T\cap c_T\neq \emptyset.$ This completes the proof of the theorem.
\end{proof}

We end the present article with an easy sufficient condition for the non-empty intersection of the minimum norm attainment set and the Crawford number attainment set of a bounded linear operator on a normed space.

\begin{theorem}
	Let $\mathbb{X}$ be a normed space. Let $T\in S_{L(\mathbb{X})}$ be such that $m_T \neq \emptyset$. If $c(T)=m(T),$ then  $m_T \cap c_T\neq \emptyset.$
\end{theorem}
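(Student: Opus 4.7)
The plan is to proceed in direct analogy with Theorem \ref{theorem:nonempty}, but with the roles of sup and inf reversed. Given the hypothesis $m_T \neq \emptyset$, I would first fix some $x \in m_T$, so that $\|Tx\| = m(T)$. The goal is then to show that this very same $x$ already belongs to $c_T$, which would yield the stronger inclusion $m_T \subseteq c_T$ and in particular the desired non-empty intersection.

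To produce the functional witnessing $x \in c_T$, I would invoke the Hahn--Banach theorem to obtain some $x^{*}\in S_{\mathbb{X}^{*}}$ with $x^{*}(x)=1$. The pair $(x,x^{*})$ is then admissible in the infimum defining $c(T)$, so on one hand $c(T)\leq |x^{*}(Tx)|$, while on the other hand $|x^{*}(Tx)|\leq \|x^{*}\|\,\|Tx\|=m(T)$. Combining these with the hypothesis $c(T)=m(T)$ forces $|x^{*}(Tx)|=c(T)$, and by definition this says precisely that $x\in c_T$.

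I do not foresee any real obstacle here: the argument is essentially a one-line Hahn--Banach sandwich, and the only substantive observation is that the ``inf over $x$'' in $m(T)$ and the ``inf over $(x,x^{*})$'' in $c(T)$ point in compatible directions, exactly mirroring the sup--sup pairing that made Theorem \ref{theorem:nonempty} go through. Consequently the proof should be short and parallel in structure to that of Theorem \ref{theorem:nonempty}.
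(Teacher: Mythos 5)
Your proposal is correct and is essentially identical to the paper's own proof: fix $x\in m_T$, take a norming functional $x^{*}$ via Hahn--Banach, and sandwich $c(T)\leq |x^{*}(Tx)|\leq \|Tx\|=m(T)=c(T)$. No further comment is needed.
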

\begin{proof}
	Let $x\in m_T$ and $x^*\in S_{\mathbb{X}^*}$ be such that $x^*(x)=1.$ Then $c(T)=m(T)=\|Tx\|\geq |x^*(Tx)|\geq c(T)$ implies that $|x^*(Tx)|= c(T).$ Thus, $x\in c_T.$ Hence, $m_T\cap c_T\neq\emptyset.$
\end{proof}

\bibliographystyle{amsplain}

\end{document}